\documentclass[11pt]{amsart}

\usepackage{amssymb,xspace}

\usepackage{hyperref}

\newcommand*{\C}{\ensuremath{\mathbb C}\xspace} 
\newcommand*{\hatC}{\ensuremath{\hat C}\xspace} 

\newcommand*\F{\ensuremath{\mathcal F}\xspace}

\newcommand*\Ooo{\ensuremath{\mathcal O}\xspace}
\newcommand*{\PP}{\ensuremath{\mathbb P}\xspace}

\newcommand*{\R}{\ensuremath{\mathbb R}\xspace}
\newcommand*{\T}{\ensuremath{\mathcal T}\xspace}
\newcommand*{\X}{\ensuremath{\mathcal X}\xspace}
\newcommand*{\Z}{\ensuremath{\mathbb Z}\xspace}
\newcommand*{\Zcal}{\ensuremath{\mathcal Z}\xspace}
\def \lmod#1\rmod {\vphantom{#1}\left|\smash{#1}\right|}

\DeclareMathOperator{\Aut}{Aut}
\DeclareMathOperator{\Perm}{Perm}

\DeclareMathOperator{\Spec}{Spec}
\DeclareMathOperator{\Sym}{Sym}

\DeclareMathOperator{\pr}{pr}

\let\ph\varphi

\newtheorem{proposition}{Proposition}[section]
\newtheorem{theorem}[proposition]{Theorem}

\newtheorem{lemma}[proposition]{Lemma}
\newtheorem{corollary}[proposition]{Corollary}

\theoremstyle{remark}
\newtheorem{note}[proposition]{Remark}

\theoremstyle{definition}
\newtheorem{definition}[proposition]{Definition}

\title[On projections of smooth and nodal plane curves]{On projections of 
smooth and nodal plane curves}

\author{Yu. Burman}

\address{Independent University of Moscow, 11, B.Vlassievsky per., Moscow, 
Russia, 119002 and National Research University Higher School of Economics, 7 Vavilova str., Moscow, 
Russia, 117312} 

\email{yburman@gmail.com}

\author{Serge Lvovski}

\address{National Research University Higher School of Economics and
AG Laboratory HSE, 7 Vavilova str., Moscow, Russia, 117312}

\email{lvovski@gmail.com}

\thanks{Y.B.\ was supported by the Lady Davis foundation visiting 
professorship grant, HSE Scientific Foundation individual research grant 
12-01-0015, the MON grant 02.740.11.0608, the Simons--IUM fellowship by the 
Simons foundation, by the 2013 Dobrushin professorship grant and by the 
RFBR grant N.Sh.5138.2014.1 (``Scientific school of V.I.Arnold''). 
S.L.\ was partially supported by AG Laboratory NRU HSE, RF government 
grant, ag.\ 11.G34.31.0023}

\keywords{plane algebraic curve, projection, monodromy,
  Picard--Lefschetz theory, Chisini conjecture}

\subjclass{Primary 14H50, secondary 14D05, 14N99}

\sloppy

\begin{document}

 \begin{abstract}
Suppose that $C\subset\PP^2$ is a general enough nodal plane curve of
degree $>2$, $\nu\colon \hat C\to C$ is its normalization, and 
$\pi\colon \hat C\to\PP^1$ is a finite morphism simply ramified over
the same set of points as a projection $\pr_p\circ\nu\colon\hat C
\to\PP^1$, where $p\in\PP^2\setminus C$ (if $\deg C=3$, one should
assume in addition that $\deg\pi\ne4$). We prove that the morphism
$\pi$ is equivalent to such a projection if and only if it extends to
a finite morphism $X\to(\PP^2)^*$ ramified over $C^*$, where $X$ is a
smooth surface.

As a by-product, we prove the Chisini conjecture for mappings ramified
over duals to general nodal curves of any degree~$\ge3$ except for
duals to smooth cubics; this strengthens one of Victor Kulikov's
results.
 \end{abstract}

\maketitle

\section{Introduction}

Let $C \subset \PP^2$ be a projective curve of degree $d > 2$ such that all 
its singularities are nodes, and let $\nu\colon \hatC \to C$ be the 
normalization mapping. For a point $p \notin C$ consider the projection 
$\pr_p: C \to \PP^1 = p^\perp$ from $p$; here $p^\perp \subset \mathbb P^2$ 
is the set of lines passing through $p$, and $\pr _p$ assigns to $x$ the 
line joining $p$ and $x$. The composition $\pr _p \circ \nu: \hat C \to 
\mathbb P^1$ will be called a generalized projection; its branch locus 
coincides with $p^\perp \cap C^*$, where $C^* \subset (\PP^2)^*$ is the 
curve dual to $C$. The generalized projection has simple ramification if 
and only if $p^\perp$ is transversal to $C^*$.

Suppose that $\pi\colon C'\to p^\perp$, where $C'$ is a smooth projective 
curve, is a holomorphic mapping with simple ramification and such that the 
branch locus of $\pi$ coincides with $p^\perp \cap C^*$. We are looking for 
a criterion for $\pi$ to be isomorphic to the generalized projection 
$\pr_p\circ\nu$.

\subsubsection*{Problem background}
Consider first the case when $C$ is smooth. An easy dimension count
shows that if $d > 3$ then the branch locus of a projection is not
arbitrary.  Namely, it follows from the Riemann--Hurwitz formula that
a degree $d$ map from a curve of degree $d$ has $d(d-1)$ critical
values, provided the ramification is simple. So, its branch locus is a
point of $\Sym^{d(d-1)}\PP^1 = \PP^{d(d-1)}$. The space of projective
curves $C \subset \PP^2$ of degree $d$ has dimension $\binom{d+2}{2}-1
= d(d+3)/2$.  For a point $p \in \PP^2$ there is a $3$-dimensional
group of projective automorphisms $\PP^2 \to \PP^2$
preserving $p$ and all the lines containing $p$. So, the space of all
projections has the dimension $m_d \mathrel{:=} d(d+3)/2-3 =
(d^2+3d-6)/2 \ll d(d-1)$ for large $d$.

Consider now small $d$. For $d=1$ and $d=2$ the situation is trivial. For 
$d=3$ one has $m_3 = 6 = 3(3-1)$, and indeed, it is easy to see that any 
$6$ pairwise distinct points $a_1, \dots, a_6 \in \PP^1$ are branch points 
of a suitable generic projection of a cubic curve.

The case $d=4$ was studied in detail by R.Vakil in \cite{Vakil}. Here
$m_4 = 11$, $4(4-1) = 12$, so branch loci of generic projections lie
in a hypersurface $V \subset \PP^{12}$. For a point $a \in V$ there
are $255$ pairs $(C',\pi)$ where $C'$ is a smooth curve and $a$ is the
branch locus of the mapping $\pi: C' \to \PP^1$ of degree $4$. For
$120$ pairs $C'$ is a smooth plane projective curve and $\pi$ is
isomorphic to a projection. For the remaining $135$ pairs the curve
$C'$ is genus $3$ hyperelliptic, hence not plane.

For $d > 4$ it is easy to derive from~\cite[Proposition 1]{EGH} that if 
$C'$ is a smooth plane projective curve then each degree $d$ mapping $\pi: 
C' \to \PP^1$ is isomorphic to a projection. We do not, though, assume $C'$ 
to be a plane curve, so a situation similar to the $d=4$ 
case may arise; cf.\ Proposition~\ref{prop:example}. 

For $C$ nodal a degree $d$ map $\hatC \to \PP^1$ does not need to be 
equivalent to a projection.

\subsubsection*{Main results} 
To make our problem more tractable we impose some generality
hypotheses on the curves and mappings in question. To wit, we will
assume that the mappings $\pi\colon\hatC\to\PP^1$ have simple
ramification (see Definition~\ref{def:moderate_covering} below) and
that the nodal curve $C\subset\PP^2$ is general enough in the sense of
Definition~\ref{def:general_curve}. If $\nu\colon\hatC\to C$ is the
normalization, then for the general point $p\in\PP^2\setminus C$ the
generalized projection $\pr _p\circ\nu \colon \hatC\to\PP^1$ has
simple ramification.

We will say that the holomorphic mappings $\ph_1\colon C_1\to\PP^1$ and 
$\ph_2\colon C_2\to\PP^1$ are \emph{equivalent} if there exists a 
holomorphic isomorphism $\psi\colon C_1\to C_2$ such that 
$\ph_2\circ\psi=\ph_1$.

 \begin{theorem}\label{th.main}
Suppose that $C\subset \PP^2$ is a nodal curve of degree~$>2$ that is 
general enough in the sense of Definition~\ref{def:general_curve}. Suppose 
that a point $p\in\PP^2\setminus C$ is such that the composition 
$\pr_p\circ\nu\colon\hatC\to\PP^1$, where $\nu\colon\hatC\to C$ is the 
normalization and $\pr_p\colon C \to p^\perp = \PP^1$ is the projection 
from $p$, has simple ramification. Suppose that $C'$ is a smooth projective 
curve and $\pi\colon C'\to p^\perp$ is a holomorphic mapping with simple 
ramification such that the branch locus of $\pi$ coincides with $p^\perp 
\cap C^*$. If $\deg C=3$, assume in addition that $C$ has a node or 
$\deg\pi\ne4$. Then the following two conditions are equivalent:

\textup{(a)} $\pi$ is equivalent to $\pr _p \circ \nu$: there exists an 
isomorphism $\ph\colon C'\to\hat C$ such that $(\pr_p\circ\nu)\circ 
\ph=\pi$.

\textup{(b)} There exist a smooth projective surface $X$, a finite 
holomorphic mapping $f\colon X\to(\PP^2)^*$ that is ramified exactly over 
$C^*$, and an isomorphism $\ph\colon C'\to f^{-1}(p^\perp)$ such that 
$\left.f\right|_{f^{-1}(p^\perp)}\circ \ph=\pi$. 
 \end{theorem}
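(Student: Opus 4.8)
The plan is to prove the two implications separately, with (a)$\Rightarrow$(b) being essentially a construction and (b)$\Rightarrow$(a) being the substantial part that uses the monodromy/Picard–Lefschetz machinery the paper has set up.

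For (a)$\Rightarrow$(b), the idea is that the generalized projection $\pr_p\circ\nu$ is, by its very nature, a fiber of a family of such projections as $p$ varies. Concretely, I would let $X$ be the normalization of the incidence variety $\{(x,\ell)\in\hatC\times(\PP^2)^*: \nu(x)\in\ell\}$, or more precisely construct $X$ together with a finite map $f\colon X\to(\PP^2)^*$ sending $(x,\ell)$ to $\ell$. A point $\ell\in(\PP^2)^*$ corresponds to a line in $\PP^2$, and $f^{-1}(\ell)$ consists of the preimages under $\nu$ of the $\deg C$ intersection points of that line with $C$; the map $f$ is ramified precisely where the line is tangent to $C$ or passes through a node, i.e.\ exactly over $C^*$. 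One must check that $X$ is smooth — this uses the generality of $C$ in the sense of Definition~\ref{def:general_curve}, so that $C^*$ has only nodes and cusps and the branched cover behaves well over them — and that the restriction of $f$ to $f^{-1}(p^\perp)$ is the generalized projection from $p$, which is immediate from the construction since $p^\perp$ is exactly the pencil of lines through $p$. Composing with the isomorphism $\ph$ from (a) then gives (b).

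The heart of the matter is (b)$\Rightarrow$(a). Suppose $\pi\colon C'\to p^\perp$ extends to a finite branched cover $f\colon X\to(\PP^2)^*$ ramified over $C^*$. The strategy is to compare the monodromy representation of $f$ with that of the universal generalized projection. Both $f$ and the incidence correspondence map $f_0\colon X_0\to(\PP^2)^*$ (built as in the previous paragraph) are finite covers of $(\PP^2)^*$ of possibly different degrees, branched over $C^*$, and both are \emph{simply} ramified when restricted to a general line $p^\perp$. The monodromy group of such a cover is a subgroup of the symmetric group $\Sym_n$ generated by the transpositions coming from the simple branch points; by the generality of $C$ and the transversality of $p^\perp$ to $C^*$, I expect the relevant monodromy — acting via loops in $p^\perp\setminus C^*$ — to be the full symmetric group (or at least large enough to be rigid). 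The key point is that restricting $f$ to the generic pencil $p^\perp$ gives, by hypothesis, a cover with the \emph{same branch locus} $p^\perp\cap C^*$ as the generalized projection; so I would argue that a finite branched cover of $(\PP^2)^*$ with branch divisor exactly $C^*$ is determined up to isomorphism by its restriction to a general line, using that $\pi_1((\PP^2)^*\setminus C^*)\to\pi_1(p^\perp\setminus C^*)$ is surjective (Zariski–Lefschetz). Then one shows that the generalized projection itself is the unique such cover of the appropriate degree, so $f\cong f_0$ and hence $\pi\cong\pr_p\circ\nu$. The degree bookkeeping — ruling out covers of the "wrong" degree, which is precisely where the exceptional case $\deg C=3$, $\deg\pi=4$ must be excluded (this is the Vakil phenomenon of the genus-$3$ hyperelliptic curve) — is a delicate part.

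The main obstacle, I expect, is controlling the monodromy/fundamental-group argument at the nodes and cusps of $C^*$: over a node of $C^*$ the local picture of a branched cover involves two commuting transpositions (or products thereof), over a cusp a single transposition in a specific position, and one must verify that the extension hypothesis in (b) forces the local monodromy of $f$ around each component of $C^*$ to match that of the generalized projection — in particular that $f$ cannot "unramify" or acquire extra sheets over these singular points. This is where a Picard–Lefschetz analysis of the vanishing cycles is needed, together with the generality of $C$ to guarantee that $C^*$ has only the expected singularities and that the global monodromy is as large as possible, pinning down the cover uniquely. Handling this carefully, and separately treating the low-degree case $d=3$ (where the dimension count is tight and the exceptional degree-$4$ cover genuinely occurs), is the crux; once the cover $f$ is identified with the incidence correspondence $f_0$, restricting to $p^\perp$ yields the desired equivalence in (a) immediately.
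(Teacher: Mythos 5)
Your (a)$\Rightarrow$(b) is essentially the paper's construction: the incidence surface $X_C=\{(x,t)\in\hatC\times(\PP^2)^*\mid \nu(x)\in t^\perp\}$ with $f_C(x,t)=t$. One correction: smoothness of $X_C$ does not use the generality of $C$ at all --- $X_C$ is the projectivization of the rank-two bundle $\nu^*\T_{\PP^2}(-1)$ over the smooth curve $\hatC$, hence smooth for an arbitrary nodal curve (the paper even remarks that this implication needs no genericity hypotheses). Your appeal to ``the branched cover behaves well over the nodes and cusps of $C^*$'' is beside the point here.

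The genuine gap is in (b)$\Rightarrow$(a). The fact you invoke --- a cover of $(\PP^2)^*\setminus C^*$ is determined by its restriction to a general line (by surjectivity of $\pi_1(p^\perp\setminus C^*)\to\pi_1((\PP^2)^*\setminus C^*)$; you wrote the map in the wrong direction) --- does not give $f\cong f_C$, because the hypothesis only says that $\pi=\left.f\right|_{f^{-1}(p^\perp)}$ has the \emph{same branch points} as $\pr_p\circ\nu$, not the same monodromy; there are in general many inequivalent simply ramified covers of $\PP^1$ with prescribed branch locus. What you then assert --- that the generalized projection is ``the unique such cover,'' i.e.\ uniqueness of a generic cover of $(\PP^2)^*$ branched over $C^*$ with smooth total space --- is exactly the Chisini-type statement that is the entire content of the paper's Section~4; Kulikov's theorem covers it only for $\deg C\ge7$ (or $6$, $5$ with enough nodes), and your ``monodromy is the full symmetric group / rigid'' expectation is not an argument, nor does it control $\deg f$, which is \emph{not} assumed to equal $\deg C$. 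The paper's mechanism, absent from your sketch, is: since $C$ is nodal, $\pi_1(\PP^2\setminus C)$ is abelian, so the monodromy on $H^1(f^{-1}(L),\Z)$ is finite; by Picard--Lefschetz the vanishing cycle attached to a simple tangent line $L$ of $C^*$ is therefore null-homologous, so $f^{-1}(L)$ splits as $Y_1\cup Y_2$ meeting transversally in one point; the Hodge index theorem then forces $(Y_1,Y_1)=0$ and $\left.f\right|_{Y_1}\colon Y_1\to L$ an isomorphism, except in the Veronese case ($C$ a smooth cubic, $\deg f=4$); finally the family of these sections $Y_1$ over $\hatC$ (the incidence scheme $\Zcal\subset\hatC\times X$ and a generic-fiber argument) yields an isomorphism $\Phi\colon X\to X_C$ with $f_C\circ\Phi=f$, and restricting to $p^\perp$ gives (a). Also, the excluded case $\deg C=3$, $\deg\pi=4$ is this degree-$4$ projection of $v_2(\PP^2)$ branched over the dual of a smooth cubic, not Vakil's genus-$3$ hyperelliptic phenomenon, which is a different matter.
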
 

 \begin{note}
The implication $(\mathrm a)\Rightarrow(\mathrm b)$ holds without the 
genericity hypotheses, for arbitrary nodal curves and arbitrary generalized 
projections. See the proof of Proposition~\ref{gen(a)=>(b)} below.
 \end{note}

\begin{note}
If $C$ is a smooth cubic and $\deg f = 4$ then the theorem is wrong; 
see Remark~\ref{chisini:example}.
\end{note}

\begin{note}
The condition of smoothness of $X$ in the theorem cannot be 
omitted, at least for $\deg f$ equal to $3$ or $4$: in Section 
\ref{sec:example} we show that for $d=3$ and $4$ there exists 
a general enough smooth curve $C\subset\PP^2$ of degree $d$, a line 
$p^\perp \subset (\PP^2)^*$ (for some $p \in \PP^2$) transversal to~$C^*$, 
a smooth projective curve $C'$, and a morphism $\pi\colon C'\to p^\perp$ 
simply ramified over $p^\perp \cap C^*$ such that condition~(b) of Theorem 
\ref{th.main} with the smoothness omitted holds but $\pi$ is not equivalent 
to the projection $\pr _p$ (Proposition~\ref{prop:example}). We do not know 
similar counterexamples with $d > 4$.
\end{note}

In other words, if we are given a simply ramified covering of $\PP^1$
with the same branch locus as that of a (generalized) projection from
a given point~$p$, then this covering is the generalized projection if
and only if it can be extended to a finite ramified covering of
$(\PP^2)^*$ branched over $C^*$, with a smooth surface upstairs.

Theorem~\ref{th.main} may be restated in topological terms: we will show 
that if $L \subset (\PP^2)^*$ is a projective line then the mapping 
$\pi\colon C'\to L$ is equivalent to the generalized projection 
$\pr_p\circ\nu$ if and only if the covering $\pi^{-1}(L\setminus C^*)\to 
L\setminus C^*$ can be extended to a covering $X_0\to(\PP^2)^*\setminus 
C^*$ such that its fiber monodromy satisfies some extra conditions ($C^*$ 
must have no bad nodes or bad cusps in the sense of 
Definitions~\ref{def:good.nodes} and~\ref{def:bad.cusp}); see 
Theorem~\ref{th.main2}.

Note that we make (almost) no assumption about genus of the curve $C'$ or 
degree of the morphism~$f$.

The $(\mathrm a) \Rightarrow (\mathrm b)$ implication in
Theorem~\ref{th.main} is easy (see Section~\ref{(a)=>(b)}). If $\deg C\ge 
7$, the $(\mathrm b) \Rightarrow (\mathrm a)$ implication 
follows very easily from Theorem~10 of Victor Kulikov's 
paper~\cite{Kulikov1999} (see Proposition~\ref{Chisini_argument} below), 
but for the remaining cases $3\le\deg C\leq 6$, where the Chisini 
conjecture for coverings ramified over $C^*$ is not completely proved, it 
requires more work. We give an argument that works uniformly in all cases, 
including the case $\deg C\ge 7$; our proof is
quite different from Kulikov's one. As a by-product, we 
establish the Chisini conjecture for coverings of $\PP^2$ ramified over 
curves dual to general enough nodal curves of degrees $4$, $5$, and $6$ 
(and of degree $3$ provided that either the curve does have a node or 
degree of the covering is not $4$). This extends Theorem~10 
from~\cite{Kulikov1999}.

The paper is organized as follows. In Section~\ref{(a)=>(b)} we prove the 
easy part of Theorem~\ref{th.main}. In Section~\ref{sec:plastering} we 
study coverings of the projective plane that are ramified over a curve 
having only nodes and standard cusps as singularities, and in 
Section~\ref{sec:PL} we prove the difficult part of Theorem~\ref{th.main}. 
Finally, in Section~\ref{sec:example} we show that, at least if $\deg C = 
3$ and $\deg C = 4$, the smoothness condition in Theorem~\ref{th.main} 
cannot be omitted.

The authors would like to thank Maxim Kazarian, Victor Kulikov, Alexander 
Kuznetsov, Sergey Lando, Sergey Natanzon, Ossip Schwarzman, Andrey 
Soldatenkov, and Victor Vassiliev for useful discussions. We are grateful 
to Igor Khavkine, Francesco Polizzi, Damian R\"ossler, Will Sawin, and 
particularly Rita Pardini for consultations at 
\url{http://mathoverflow.net}. 

The first named author wishes to thank the Technion --- Israel Institute of 
Technology, where the final version of the paper was written, for the warm 
hospitality.

Our special thanks go to Boris Shapiro, at whose instigation this research 
was started. Without his constant encouragement this paper would never have 
been written.

\section*{Notation, conventions, and definitions}

The base field for all the algebraic varieties will be the field \C of
complex numbers.

Except for the case when we mention explicitly Zariski topology, all
topological spaces will be assumed Hausdorff and locally simply
connected, and all the topological terms will refer to the classical
topology of complex algebraic varieties.

We will say that a singular point of a curve is a \emph{node} if it is
locally analytically isomorphic to the singularity of the plane curve
defined by the equation $xy=0$, and that a singular point is a
\emph{standard cusp} (or simply a cusp) if it is locally analytically
isomorphic to the singularity of the plane curve defined by the
equation $y^2+x^3=0$.

If $f\colon X\to Y$ is a finite holomorphic map of smooth varieties of
equal dimension, then by \emph{ramification locus} (a.k.a.\ the set of
critical points) of $f$ we mean the closed subset
 \[ 
R=\{x\in X\mid\text{derivative of $f$ is 
degenerate at $x$}\}; 
 \] 
by \emph{branch locus} (a.k.a.\ the set of critical values) of $f$ we mean 
the subset $f(R)\subset Y$.

Let $f\colon W\to V$ be a covering of topological spaces, and take $v_0\in 
V$.  Then the action of the group $\pi_1(V,v_0)$ on the set $f^{-1}(v_0)$ 
via loop lifting will be called the \emph{fiber monodromy} of $f$.

If $f\colon Y \to X$ is a finite morphism of algebraic varieties of equal 
dimension (i.e., a proper holomorphic map with finite fibers) and $B 
\subset X$ is its branch locus, then the restriction of $f$ to $f^{-1}(X 
\setminus B) \subset Y$ is a finite-sheeted covering of topological spaces. 
Take $x_0 \in X \setminus B$. Then by fiber monodromy of $f$ we will mean 
fiber monodromy of the covering $f^{-1}(X \setminus B) \to Y\setminus B$ 
with respect to the point~$x_0$.

Suppose now that $X = \PP^1$, so the branch locus $B\subset \PP^1$ is a 
finite set. If $p \in B$ and $\gamma_p$ is the conjugacy class in 
$\pi_1(\PP^1\setminus B)$ corresponding to a small loop around $p$, then 
the image of $\gamma_p$ under the fiber monodromy homomorphism is a 
conjugacy class in the symmetric group~$S_d$, where $d$ is the number of 
points in $f^{-1}(x_0)$. This conjugacy class (or the corresponding 
partition of $d$) is called the \emph{cyclic type} of the point~$p$.

 \begin{definition}\label{def:moderate_covering}
Suppose that $C$ is a smooth projective curve. We will say that a finite 
morphism $f\colon C\to\PP^1$ has \emph{simple ramification} if each branch 
point $\xi\in\PP^1$ has the cyclic type of a transposition. 
 \end{definition} 

If $C \subset \PP^2$ is a nodal curve and $\nu\colon\hatC\to C$ is the 
normalization, then the generalized projection of $\hatC$ from a general 
point of $\PP^2$ has simple ramification.

 \begin{definition}\label{def:general_curve}
Suppose that $C\subset\PP^2$ is an irreducible projective curve such that 
each singular point of $C$ is a node. Let us say that $C$ is \emph{general 
enough} if the following conditions are satisfied.

-- all the inflexion points of $C$ are simple;

-- no line is tangent to $C$ at more than two points;

-- if a line is tangent to $C$ at an inflexion point, it is not tangent to 
$C$ elsewhere.
 \end{definition}

Here, we assume that a line is tangent to $C$ if it is either tangent to 
$C$ at a smooth point or tangent to a branch of $C$ at a node. By inflexion 
point we mean either a smooth point $x\in C$ for which the local 
intersection index of $C$ with its tangent at $x$ is greater than $2$, or a 
node $x\in C$ for which the local intersection index of at least one of the 
two (limiting) tangent lines to branches a $x$ with the corresponding 
branch is greater than~$2$; we say that an inflexion point is \emph{simple} 
if the intersection index in question equals exactly~$3$.

For a projective subspace $\alpha \subset \PP^n$ we denote by $\alpha^\perp 
\subset (\PP^n)^*$ the set of hyperplanes in $\PP^n$ containing $\alpha$, 
where $(\PP^n)^*$ is the dual projective space. If $\dim \alpha = k$ then 
$\alpha^\perp \subset (\PP^n)^*$ is a projective subspace of dimension 
$n-1-k$; in particular, if $\alpha$ is a point then $\alpha^\perp$ is a 
hyperplane (denoted by $H_\alpha$ in SGA7, see \cite{SGA7.2}).

If $X \subset \PP^n$ is a projective variety, then by 
$X^*\subset{(\PP^n)}^*$ we will denote its projective dual, i.e., the 
closure of the set of hyperplanes tangent to $X$ at smooth points.

We also will be using the notation $\alpha^\perp$ and $X^*$ when $\alpha, X 
\subset (\PP^n)^*$, where the canonical isomorphism $((\PP^n)^*)^* = \PP^n$ 
is assumed.

If $X\subset\PP^2$ is a projective curve, we say that a line 
$L\subset\PP^2$ is transversal to $X$ if $L$ does not pass through singular 
points of $X$ and is not tangent to $X$ at any non-singular point.

If $D_1$ and $D_2$ are Cartier divisors on a projective surface, then 
$(D_1,D_2)$ stands for their intersection index. 

\section{Proof of the $(\mathrm a)
      \Rightarrow (\mathrm b)$ implication in Theorem
      \ref{th.main}}\label{(a)=>(b)}

This part of Theorem~\ref{th.main} follows immediately from

 \begin{proposition}\label{gen(a)=>(b)}
Suppose that $C\subset\PP^2$ is a nodal curve, $\nu\colon \hat C\to C$
is the normalization, $p\in\PP^2\setminus C$, and $\pr _p\colon C\to
p^\perp$ is the projection from $p$. Then there exists a smooth
projective surface $X_C$, a finite regular mapping $f_C\colon
X_C\to(\PP^2)^*$ having the dual curve $C^* \subset(\PP^2)^*$ as its
branch locus, and an isomorphism $\ph\colon \hatC\to
f_C^{-1}(p^\perp)$ such that $f_C\circ \ph=\pr_p\circ\nu$.
 \end{proposition}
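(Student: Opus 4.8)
The plan is to construct $X_C$ and $f_C$ directly from the geometry of secant/tangent lines to $C$, exploiting the classical description of the dual curve. Recall that a point $q\in(\PP^2)^*$ is a line $\ell_q\subset\PP^2$, and $q$ lies on $C^*$ precisely when $\ell_q$ is tangent to $C$ (at a smooth point, or tangent to a branch at a node). For a generic line $\ell_q$ not through a node and not tangent to $C$, the intersection $\ell_q\cap C$ consists of $d=\deg C$ distinct smooth points, each of which has a well-defined preimage under $\nu$. So I would set
\[
X_C \;=\; \overline{\bigl\{(x,q)\in\hatC\times(\PP^2)^* \;:\; \nu(x)\in\ell_q\bigr\}},
\]
the closure of the incidence variety, with $f_C\colon X_C\to(\PP^2)^*$ the second projection and $g_C\colon X_C\to\hatC$ the first. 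Over the open set where $\ell_q$ is transversal to $C$ and misses the nodes, $f_C$ is visibly an unramified $d$-sheeted covering, and the fibre of $g_C$ over $x\in\hatC$ is the line $(\nu(x))^\perp\subset(\PP^2)^*$ — a $\PP^1$. Thus $g_C$ exhibits (an open part of) $X_C$ as a $\PP^1$-bundle over $\hatC$, which is smooth; I would check that taking the closure does not spoil smoothness, i.e. that $X_C$ is in fact globally this $\PP^1$-bundle $\PP(\Ncal)$ for a suitable rank-two bundle $\Ncal$ on $\hatC$ (concretely, $\nu^*$ of the restriction to $C$ of the universal quotient, or the pullback of the tautological sub-bundle — this is the standard ``dual of the conormal'' construction). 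Granting that, $X_C$ is a smooth projective surface and $f_C$ is a finite morphism of degree $d$.

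Next I must identify the branch locus of $f_C$ and the fibre over $p^\perp$. A point $q\in(\PP^2)^*$ is a branch point of $f_C$ iff the $d$ points of $\ell_q\cap C$ (counted on $\hatC$) collide, which happens exactly when $\ell_q$ is tangent to $C$ at a smooth point, or passes through a node (where two branches of $C$, hence two points of $\hatC$, lie on $\ell_q$), or is tangent to a branch at a node. The first and third cases are exactly $q\in C^*$; the locus of lines through a fixed node is a line in $(\PP^2)^*$, but over a generic such line the two colliding sheets come from the \emph{two distinct} points of $\nu^{-1}(\text{node})$, so $f_C$ is actually étale there once we pass to $\hatC$ — that is, nodes of $C$ contribute to $C^*$ only through the bitangent-type tangencies already accounted for. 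Hence the branch locus of $f_C$ is precisely $C^*$, as required. Finally, a point $q\in p^\perp$ is a line $\ell_q$ through $p$, and $f_C^{-1}(q)=\{(x,q): \nu(x)\in\ell_q\}=\{x: \pr_p(\nu(x))=q\}$ — so $f_C^{-1}(p^\perp)$ with the map $f_C$ is, via $g_C$, canonically identified with $\hatC$ together with $\pr_p\circ\nu$. This gives the isomorphism $\ph\colon\hatC\to f_C^{-1}(p^\perp)$ with $f_C\circ\ph=\pr_p\circ\nu$.

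The main obstacle I anticipate is the smoothness of $X_C$ — equivalently, checking that the closure of the naive incidence variety really is the $\PP^1$-bundle and has no extra components or singular points lurking over the nodes of $C$ and over the singular locus of $C^*$ (its nodes and cusps, arising from bitangents and flexes of $C$). The clean way around this is to \emph{define} $X_C$ as $\PP(\nu^*\Ooo_{\PP^2}(1)|_C$-twist$)$ from the start — i.e. $X_C=\PP(\E)$ where $\E$ is the rank-two bundle on $\hatC$ whose fibre at $x$ is the space of linear forms on $\PP^2$ modulo those vanishing at $\nu(x)$ is wrong; rather $\E_x=$ the $2$-plane in $(\C^3)^*$ cutting out the point $\nu(x)$, so that $\PP(\E_x)=(\nu(x))^\perp$. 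Then $X_C\to(\PP^2)^*$ is tautologically defined, finiteness and $\deg=d$ follow from $\E$ being globally generated with $\det\E=\nu^*\Ooo(1)$ of positive degree, smoothness is automatic, and the identifications of the branch locus and of $f_C^{-1}(p^\perp)$ proceed as above by a local computation. I would present the construction in this intrinsic form to sidestep the closure issue entirely, relegating the verification that the branch locus is exactly $C^*$ (no more, no less) to a short local analysis at smooth points, at nodes, and near flexes of $C$.
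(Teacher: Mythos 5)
Your proposal is correct and is essentially the paper's own argument: the paper defines $X_C=\{(x,t)\in\hatC\times(\PP^2)^*\mid\nu(x)\in t^\perp\}$, observes it is the projectivization of $\nu^*\T_{\PP^2}(-1)$ over $\hatC$ (hence smooth --- your intrinsic $\PP(\E)$ is the same bundle, and no closure is needed since the incidence condition is already closed), identifies the branch locus with $C^*$ by the same tangency analysis at smooth points and at branches through nodes, and takes $\ph(x)=(x,\overline{p\,\nu(x)})$ as the isomorphism onto $f_C^{-1}(p^\perp)$.
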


 \begin{proof}
Put 
 \begin{equation}\label{def_of_X_C} 
X_C=\{(x,t)\in \hatC\times(\PP^2)^*\mid \nu(x)\in t^\perp\}.
 \end{equation} 
The surface $X_C$, being the projectivization of the vector bundle 
$\nu^*\T_{\PP^2}(-1)$, is smooth.

Define the mapping $f_C\colon X_C\to (\PP^2)^*$ by the formula
 \begin{equation}\label{def:f_C}
f(x,t) = t.
 \end{equation}
For $t\in(\PP^2)^*$,
 \[
f_C^{-1}(t)=\{(x,t)\mid \nu(x)\in t^\perp\};
 \]
$f$ is a finite morphism of degree~$d$ since for the general 
$t\in(\PP^2)^*$ the line $t^\perp$ intersects $C$ at $d$ smooth points. Its 
branch locus is the set of $t$ such that $t^\perp$ is either tangent to $C$ 
at a smooth point or tangent to a branch of $C$ at a node, so the branch 
locus of $f$ coincides with~$C^*$.

If $p\in \PP^2\setminus C$ then 
 \[
f_C^{-1}(p^\perp)=\{(x,t)\in \hatC\times(\PP^2)^*\mid 
\nu(x), p \in t^\perp\}.
 \]
It is clear that the mapping $\ph\colon \hatC\to f^{-1}(p^\perp)$ sending 
$x$ to the pair $(x,\overline{px})$ is the required isomorphism. 
 \end{proof}

\section{Plastering of generic coverings over complements 
to curves}\label{sec:plastering} 

In this section we prepare for the proof of the implication 
$\mathrm{(b)}\Rightarrow\mathrm{(a)}$ in Theorem~\ref{th.main}.

We start with a simple general fact. Suppose that $D\subset\PP^2$ is an 
arbitrary projective curve, $X_0$ is a smooth affine surface and $f_0\colon 
X_0\to \PP^2\setminus D$ is a finite (topological) covering.

 \begin{proposition}\label{prop:same_mono}
Suppose that, in the above setting, there exists a line
$L_0\subset\PP^2$ transversal to $D$ such that the induced covering
$f_0^{-1}(L_0)\to L_0\setminus D$ has simple ramification. Then for
any line $L\subset\PP^2$ transversal to $D$ the induced covering
$f_0^{-1}(L)\to L\setminus D$ also has simple ramification.
 \end{proposition}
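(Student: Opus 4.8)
The plan is to show that the cyclic type of any branch point of the induced covering --- at a point where the transversal line meets a given irreducible component of $D$ --- is a conjugacy class in $S_d$ that depends only on the global covering $f_0$ and on that component, and not at all on the line; simple ramification is then equivalent to a condition on $f_0$ alone.

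First I would fix notation: let $d=\deg f_0$, let $\mu\colon\pi_1(\PP^2\setminus D,x_0)\to S_d$ be the fiber monodromy of $f_0$, and write $D=D_1\cup\dots\cup D_k$ for the decomposition into irreducible components. For each $i$, the set of points of $D_i$ that are smooth points of $D$ is $D_i$ minus finitely many points, hence a non-empty connected (in the classical topology) Zariski-open subset of $D_i$; consequently all meridians of $D$ based at such points --- small loops encircling $D$ once --- are conjugate in $\pi_1(\PP^2\setminus D,x_0)$, and their common image $\Gamma_i\mathrel{:=}\mu(\text{meridian around }D_i)$ is a well-defined conjugacy class in $S_d$ (possibly trivial).

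Next, given a line $L$ transversal to $D$, note that $L$ avoids all singular points of $D$ and, since $\deg D_i\ge 1$, meets each $D_i$ --- necessarily transversally, in exactly $\deg D_i$ distinct points, each of which is a smooth point of $D$. The covering $f_0^{-1}(L\setminus D)\to L\setminus D$ is unramified, and $L\setminus D\cong\PP^1\setminus(L\cap D)$, so it underlies a finite morphism $\overline\pi\colon\overline C\to\PP^1$ with $\overline C$ a smooth projective curve, branched only over points of $L\cap D$. I would then verify, in local analytic coordinates at a point $q\in L\cap D_i$ in which $D=\{y=0\}$, that transversality of $L$ to $D$ at $q$ forces a small loop of $L\setminus D$ around $q$ to wind exactly once in the $y$-coordinate, i.e.\ to be a meridian around $D_i$. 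Hence the cyclic type of $q$ for $\overline\pi$ equals $\Gamma_i$, and $f_0^{-1}(L)\to L\setminus D$ has simple ramification if and only if each $\Gamma_i$ is trivial or the class of a transposition.

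This last condition refers only to $\mu$; since it holds when $L=L_0$, it holds for every line transversal to $D$, which is the assertion. The only delicate point is the local computation at a transverse intersection: it is precisely transversality (rather than tangency) that makes the loop around $q$ an honest meridian, so that the local monodromy is $\Gamma_i$ itself and not some power of it --- had $L$ been tangent to $D$ at $q$ the loop would wind twice and the cyclic type would be that of $\mu(\text{meridian})^2$. Everything else is the standard fact that meridians around a fixed component are mutually conjugate.
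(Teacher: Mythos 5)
Your proof is correct, and it takes a genuinely different route from the paper's. The paper argues by isotopy: the set $U$ of lines transversal to $D$ is arcwise connected, the family of punctured lines $\X\to U$ is a locally trivial fibration, and pulling the covering back along a path in $U$ from $L_0$ to $L$ gives a trivial family, so the induced coverings over $L_0\setminus D$ and $L\setminus D$ are actually isomorphic --- in particular one has simple ramification if and only if the other does. You instead localize the question: using the standard fact that all meridians around a fixed irreducible component $D_i$ are conjugate in $\pi_1(\PP^2\setminus D,x_0)$, and the observation that transversality makes the small loop of $L$ around each point of $L\cap D_i$ an honest meridian of $D_i$ (your remark about the tangent case winding twice is exactly the point), you identify the cyclic type at every such point with the fixed conjugacy class $\Gamma_i$, the image under $\mu$ of a meridian of $D_i$; thus simple ramification over a transversal line is equivalent to the $L$-independent condition that each $\Gamma_i$ be trivial or a transposition, and the conclusion follows. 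The paper's argument gives slightly more (equivalence of the induced coverings over any two transversal lines), while yours is more elementary, dispenses with the fibration machinery, and yields precisely the characterization of generic coverings --- meridians acting by transpositions --- that the paper in effect exploits later when analyzing the local monodromy at nodes and cusps of $D$; your care about the case of trivial $\Gamma_i$ is also consistent with Definition~\ref{def:moderate_covering}, which constrains only actual branch points.
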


Following \cite{Kulikov1999}, we will say that coverings satisfying
hypotheses of the proposition are \emph{generic}.

 \begin{proof}[Proof of the proposition]
The set 
 \[
U=\{t\in(\PP^2)^*\mid \text{$t^\perp$ is transversal to~$D$}\}
 \]
is arcwise connected; let $\gamma: [0,1] \to U$ be a path such that 
$\gamma(0) = L_0$, $\gamma(1) = L$. Let also 
 \[
\X=\{(x,t)\in(\PP^2\setminus D)\times U\mid x\in t^\perp\};
 \]
the natural projection $p\colon \X\to U$ is a locally trivial fiber bundle 
with the fiber homeomorphic to $\PP^1$ punctured at $\deg D$ points. The 
pullback covering $p^* f_0\colon \tilde \X\to \X$ restricted to $p^{-1}(t) 
\subset \X$ is isomorphic to the covering~$f_0^{-1}(t^\perp)\to t^\perp$. 
The pullback bundle $\gamma^* p: \overline X \to [0,1]$ is trivial, so the 
proposition follows. 
 \end{proof} 

If $f_0\colon X_0\to \PP^2\setminus D$ is a covering, then $X_0$
carries a unique structure of complex variety for which the mapping
$f$ is a finite unramified morphism. Suppose now that the covering is
generic and all the singularities of the curve $D$ are nodes or
standard cusps. It follows from a theorem of Grauert
and Remmert~\cite[Expos\'e~XII, Th\'eor\`eme 5.4]{SGA1} that in this
situation $f_0$ extends uniquely to a finite mapping $f\colon
X\to\PP^2$ with normal~$X$. A well-known GAGA-style
result~\cite[Expos\'e~12, Corollaire 4.6]{SGA1} implies that the
resulting complex space $X$ will be a projective
surface. We are going now to describe explicitly the singularities of
surface $X$ as well as the structure of the mapping $f$ near
ramification locus.

In the sequel, $\Delta$ will denote the unit disk $\{z\colon 
\left|z\right|<1\} \subset \C$, and $\Delta^*$ will stand for the punctured 
disk $\Delta \setminus \{0\}$.

Suppose that $q$ is a node of $D$; choose a neighborhood $U\ni q$ and an 
isomorphism $U\to \Delta^2$ such that $D\cap U$ is mapped to the set 
$\{(x,y) \in D^2\mid xy=0\}$. Fix a point $q_0\in U\setminus D$. The 
covering $f_0^{-1}(U\setminus D)\to U\setminus D$ induces a fiber monodromy 
action of the group $\pi_1(U\setminus D,q_0) = \Z \oplus \Z$ on the set 
$f^{-1}(q_0)$.

The two generators of $\pi_1(U\setminus D,q_0)$ are the small loops
around two branches of $D$ at $q$; since $f_0$ is a generic covering,
Proposition~\ref{prop:same_mono} shows that the generators act on
$f^{-1}(q_0)$ by transpositions. These transpositions commute, so they
can be either disjoint or equal.

 \begin{definition}\label{def:good.nodes}
In the above setting, we will say that a node $q\in D$ is \emph{good} (with 
respect to the covering $f_0$) if the two generators of 
$\pi_1(U\setminus D,q_0)$ act by disjoint transpositions.

We will say that a node $q\in D$ is \emph{bad} (with respect to $f_0$) if 
they act by equal transpositions.
 \end{definition}

Suppose now that $q$ is a (standard) cusp of the curve $D$. Choose a 
neighborhood $U\ni q$ and an isomorphism $U\to \Delta^2$ such that $D\cap 
U$ is mapped to the set $\{(x,y)\in D^2\mid x^3+y^2=0\}$. Fix a point 
$q_0\in U\setminus D$. The covering $X\setminus f^{-1}(D)\to\PP^2\setminus 
D$ induces a fiber monodromy action of the group $\pi_1(U\setminus D,q_0)$ 
on the set $f^{-1}(q_0)$. It is well known that $\pi_1(U\setminus D,q_0)$ 
is isomorphic to the Artin braid group on $3$ strings $B_3=\langle u,v\mid 
uvu=vuv\rangle$, where the generators $u$ and $v$ correspond to the two 
loops around two intersection points $\ell\cap D$, where $\ell$ is a line 
close to $q$.

Since $f_0$ is a generic covering, the elements $u$ and 
$v$ of $\pi_1(U\setminus D,q_0)$ act on $f^{-1}(q_0)$ by transpositions. 
These transpositions (call them $U$ and $V$) satisfy the relations $UVU = 
VUV$, so they are either non-commuting or equal.

 \begin{definition}\label{def:bad.cusp} 
In the above setting, we will say that the cusp $q\in D$ is \emph{good} 
(with respect to the covering $f_0$) if the transpositions $U$ and $V$ are 
non-commuting.

We will say that the cusp $q\in D$ is \emph{bad} (with respect to $f_0$) if 
$U = V$.
 \end{definition}

\begin{proposition}[cf.\ 
{\cite[Section 1.3]{Kulikov1999}}]\label{not.smooth.over.bad}
The local behavior of the mapping $f$ near points of the preimage 
$f^{-1}(q)$, $q \in D$, is as follows:
 \begin{enumerate}
\item\label{It:Smooth} If $q$ is a smooth point of $D$ then
  $f^{-1}(q)$ consists of $d-1$ points, and $X$ smooth
  at all of them. At exactly one of them $f$ is
  ramified, and near this point $f$ is locally isomorphic to $f(x,y) =
  (x,y^2)$.

\item\label{It:GoodNode} If $q$ is a good node then $f^{-1}(q)$ consists of 
$d-2$ points, and $X$ is smooth at all of them. At exactly two of them $f$ 
is ramified, with the same local behavior as in Case \ref{It:Smooth}. 

\item\label{It:BadNode} If $q$ is a bad node then $f^{-1}(q)$ consists of 
$d-1$ points. All of them except one are smooth. The remaining point is 
singular, locally isomorphic to $\{(x,y,z) \in \Delta^3 \mid z^2 = xy\}$ 
\textup(Du Val's $A_1$\textup); the mapping $f$ in the same coordinates is 
$f(x,y,z) = (x,y)$.

\item\label{It:GoodCusp} If $q$ is a good cusp then $f^{-1}(q)$ consists of 
$d-2$ points, and $X$ is smooth at all of them. At exactly one of them $f$ 
is ramified, and the pair $(X,f)$ near this point is locally isomorphic to 
$X = \{(x,y,z) \in \Delta^3 \mid z^3 + 3xz + 2y = 0\}$, $f(x,y,z) = (x,y)$.

\item\label{It:BadCusp} If $q$ is a bad cusp then $f^{-1}(q)$ consists of 
$d-1$ points. All of them except one are smooth, the remaining point is 
singular, locally isomorphic to $\{(x,y,z) \in \Delta^3 \mid z^2=x^3+y^2\}$ 
\textup(Du Val's $A_2$\textup); the mapping $f$ in the same coordinates is 
$f(x,y,z) = (x,y)$.
 \end{enumerate}
 \end{proposition}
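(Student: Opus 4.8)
The statement is entirely local over the target, so the plan is to analyze each of the five cases separately by working in the étale-local (or analytic-local) model determined by the fiber monodromy near $q$, and then invoking the uniqueness of the normal extension $f\colon X\to\PP^2$ from the Grauert–Remmert theorem to identify $X$ with the explicit model we construct.

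First, in each case I would write down the local fundamental group of the complement $U\setminus D$ and the prescribed monodromy action, which is dictated by the genericity of $f_0$ via Proposition~\ref{prop:same_mono}: over a smooth point of $D$ it is $\Z$ acting by one transposition; over a node it is $\Z\oplus\Z$ acting by two commuting transpositions that are either disjoint (good) or equal (bad); over a cusp it is $B_3=\langle u,v\mid uvu=vuv\rangle$ acting by two transpositions $U,V$ with $UVU=VUV$, which forces either $U=V$ (bad) or $U,V$ non-commuting (and then, since they generate a transitive subgroup on the two moved sheets, $\langle U,V\rangle\cong S_3$ with $u\mapsto$ one transposition, $v\mapsto$ another — the standard degree-$3$ cusp monodromy). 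In every case the covering splits as a disjoint union of a trivial part (the $d-2$ or $d-3$ unmoved sheets, contributing smooth unramified points of $X$) and a connected ``interesting'' part on the moved sheets; so it suffices to treat the interesting part, which is a connected covering of degree $2$ (smooth point, good node once per branch, bad node, bad cusp) or degree $3$ (good cusp).

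Next I would exhibit for each interesting part an explicit finite map of the claimed local model onto $\Delta^2$ and check two things: (i) its restriction over $\Delta^2\setminus D$ realizes exactly the prescribed monodromy, and (ii) the total space has exactly the asserted singularity (smooth, Du Val $A_1$, or Du Val $A_2$) and is normal. Concretely: for a smooth point, $(x,y)\mapsto(x,y^2)$ gives the connected double cover of $\Delta^2\setminus\{y=0\}$, and the total space is the smooth $\Delta^2$; for a good node one applies this once to each of the two branches $\{x=0\}$, $\{y=0\}$, giving two disjoint smooth ramification points; for a bad node the connected double cover of $\Delta^2\setminus\{xy=0\}$ with both loops acting by the same transposition is $\{z^2=xy\}\to\Delta^2$, which is the $A_1$ surface singularity; for a bad cusp the connected double cover of $\Delta^2\setminus\{x^3+y^2=0\}$ with $U=V$ is (after completing the square) $\{z^2=x^3+y^2\}\to\Delta^2$, the $A_2$ singularity; for a good cusp the connected triple cover with $S_3$-monodromy is the Weierstrass-type family $\{z^3+3xz+2y=0\}\to\Delta^2$ (one checks its discriminant in $(x,y)$ is the cuspidal cubic $x^3+y^2=0$ up to a unit, and it is smooth as a total space and ramified over the cusp with the pictured local structure). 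In each case normality of the model plus the Grauert–Remmert uniqueness (loc.\ cit.) forces the germ of $(X,f)$ at the relevant point to be analytically isomorphic to the model, which is exactly the assertion.

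The main obstacle is the good-cusp case: one must verify both that the prescribed $B_3$-action on the three moved sheets (an irreducible, i.e.\ transitive, $S_3$-action with $u,v$ distinct transpositions) pins down the connected triple cover \emph{uniquely}, and that this unique cover is analytically isomorphic to the stated cubic $\{z^3+3xz+2y=0\}$ with the stated smoothness and ramification — a small but genuine computation comparing the discriminant of the cubic in $z$ against $x^3+y^2$ and reading off the ramification. Everything else (the node and bad-cusp cases) reduces to recognizing standard $A_n$ double covers, and the trivial-part bookkeeping giving the sheet counts $d-1$, $d-2$, $d-1$, $d-2$, $d-1$ is immediate from how many sheets are moved. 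Finally I would remark that this is the content of \cite[Section 1.3]{Kulikov1999} in the cases treated there, so the proof may be presented as a verification rather than ab initio.
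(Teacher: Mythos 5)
Your proposal is correct and follows essentially the same route as the paper's proof: exhibit the explicit local models, check that their branch loci and fiber monodromies match the prescribed (good/bad) monodromy data, and then invoke the uniqueness part of the Grauert--Remmert theorem for normal extensions to identify the germ of $(X,f)$ with the model. The extra details you flag (normality of the models, the discriminant computation for $z^3+3xz+2y$, and the classification of $B_3$-actions on three sheets) are exactly the verifications the paper compresses into ``a direct computation'' and the $B_3\to S_3$ argument.
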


 \begin{proof}
A direct computation shows that the branch loci $B$ of the local models 
described above are as follows: $B = \{(x,0) \mid x \in \Delta\}$ for case 
\ref{It:Smooth}, $B = \{(x,y) \in \Delta^2 \mid xy=0\}$ for the cases 
\ref{It:GoodNode} and \ref{It:BadNode}, and $B = \{(x,y) \in \Delta^2 \mid 
x^3+y^2=0\}$ for the cases \ref{It:GoodCusp} and \ref{It:BadCusp}. Thus, in 
all the cases the branch loci are locally biholomorphic to the curve 
$D$ in a neighbourhood of $q$.

By the uniqueness statement of the Grauert--Remmert theorem it is enough to 
check that the monodromy action of $\pi_1(\PP^2 \setminus D)$ 
on the fiber $f_0^{-1}(q) \subset X$ is isomorphic to the action of 
$\pi_1(\Delta^2 \setminus B)$ on the fiber over $0$ in the local model. If 
$a \in f^{-1}(q)$ is a smooth point such that $f$ has the normal form $f(z) 
= z^k$ in it, then the monodromy action is a cycle of length $k$; this 
proves the statement in cases \ref{It:Smooth} and \ref{It:GoodNode}. In 
case \ref{It:BadNode} the branch locus is a union of two lines; the group 
$\pi_1(\Delta^2 \setminus B)$ is generated by two loops circling around 
these lines; apparently, they both act by the same transposition of the two 
preimages of the origin. 

To cover cases \ref{It:GoodCusp} and \ref{It:BadCusp} notice that 
$\pi_1(\Delta^2 \setminus B) = B_3$ in both of them. Any homomorphism $B_3 
\to S_3$ maps the standard generators of the group into permutations $U$ 
and $V$ satisfying $UVU = VUV$; it is easy to see that such $U$ and $V$ 
must be transpositions, either equal or non-commuting. So, there exist only 
two actions, up to conjugation, of $B_3$ on a set of three elements; this 
proves the statement in cases \ref{It:GoodCusp} and \ref{It:BadCusp}.
 \end{proof}

Suppose that $C\subset \PP^2$ is a nodal projective curve that is general 
enough in the sense of Definition~\ref{def:general_curve} and that 
$L\subset(\PP^2)^*$ is a line transversal to $C^*$.

Suppose in addition that $\pi\colon C'\to L$, where $C'$ is a smooth 
projective curve, is a holomorphic mapping with simple ramification and 
with branch locus $L\cap C^*$. Choose a point $t_0\in L\setminus C^*$.

 \begin{corollary}\label{cor:factors=>exists_X}
If the fiber monodromy homomorphism $\pi_1(L\setminus 
C^*,t_0)\to\Perm(\pi^{-1}(t_0))$ factors through the homomorphism 
$\pi_1(L\setminus C^*,t_0)\to \pi_1(\PP^2\setminus C^*,t_0)$ induced by the 
embedding $L\hookrightarrow (\PP^2)^*$, then there exists a unique pair 
$(X,f)$, where $X$ is a normal projective surface and $f\colon X\to 
(\PP^2)^*$ is a finite holomorphic mapping such that 
$\left.f\right|_{f^{-1}(L)}=\pi$.

Over each node of $C$, there is at most one singular point of $X$, and it 
must be a Du~Val $A_1$-singularity. Over each cusp of $C^*$, there is at 
most one singular point of $X$, and it must be a Du~Val $A_2$-singularity. 
The other points of $X$ are smooth.
 \end{corollary}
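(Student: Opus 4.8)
The plan is to deduce Corollary~\ref{cor:factors=>exists_X} from the Grauert--Remmert extension theorem together with the local analysis in Proposition~\ref{not.smooth.over.bad}. First I would set $D=C^*$, which by Definition~\ref{def:general_curve} and the standard theory of dual curves has only nodes and standard cusps as singularities; the nodes of $C^*$ correspond to bitangent lines of $C$ (and to pairs of branches at a node of $C$), while the cusps of $C^*$ correspond to the inflexion points of $C$, which are all simple. I would then restrict the covering $\pi$ to $C'\setminus\pi^{-1}(C^*)\to L\setminus C^*$ and use the factorization hypothesis: the fiber monodromy of this covering, as a representation of $\pi_1(L\setminus C^*,t_0)$, factors through $\pi_1((\PP^2)^*\setminus C^*,t_0)$. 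Hence there is a connected topological covering $f_0\colon X_0\to(\PP^2)^*\setminus C^*$ whose restriction over $L\setminus C^*$ recovers $\pi$ up to isomorphism. Since $L$ is transversal to $C^*$ and $\pi$ has simple ramification, this covering $f_0$ is generic in the sense following Proposition~\ref{prop:same_mono}, so Proposition~\ref{prop:same_mono} applies.

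Next I would invoke Grauert--Remmert (\cite[Expos\'e~XII, Th\'eor\`eme 5.4]{SGA1}) to extend $f_0$ uniquely to a finite morphism $f\colon X\to(\PP^2)^*$ with $X$ normal, and the GAGA-type statement (\cite[Expos\'e~12, Corollaire 4.6]{SGA1}) to conclude that $X$ is a projective surface; these are exactly the steps already carried out in the text between Proposition~\ref{prop:same_mono} and Proposition~\ref{not.smooth.over.bad}. The branch locus of $f$ is $C^*$. Uniqueness of the pair $(X,f)$ is part of the Grauert--Remmert statement (a finite morphism is determined by the associated covering of the complement of the branch locus). It remains to check that $\left.f\right|_{f^{-1}(L)}$ is isomorphic to $\pi$: the restriction $f^{-1}(L\setminus C^*)\to L\setminus C^*$ is isomorphic to $\pi$ over $L\setminus C^*$ by construction, and since both $f^{-1}(L)\to L$ and $\pi\colon C'\to L$ are finite morphisms of smooth curves (note $C'$ is smooth and, by Proposition~\ref{not.smooth.over.bad}, $f^{-1}(L)$ avoids the singular points of $X$ because $L$ is transversal to $C^*$ hence meets $C^*$ only at smooth points of $C^*$, where $f$ has the local form of case~\ref{It:Smooth}), a normalization/Zariski-main-theorem argument shows the isomorphism over $L\setminus C^*$ extends over all of $L$.

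For the local structure of $X$, I would apply Proposition~\ref{not.smooth.over.bad} directly. Over a smooth point of $C^*$, or over a good node or good cusp, $X$ is smooth by cases~\ref{It:Smooth}, \ref{It:GoodNode}, \ref{It:GoodCusp}. Over a bad node of $C^*$, by case~\ref{It:BadNode} there is exactly one singular point of $X$ and it is a Du~Val $A_1$-singularity; over a bad cusp of $C^*$, by case~\ref{It:BadCusp} there is exactly one singular point and it is a Du~Val $A_2$-singularity. The remaining assertion is the translation to the statement of the corollary: a node of $C^*$ corresponds to a bitangent of $C$, but one must relate this to ``nodes of $C$'' and ``cusps of $C^*$'' as phrased. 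Here I would use the classical bijection between the singularities of $C$ and of $C^*$: nodes of $C$ correspond to bitangent lines of $C^*$ (i.e.\ to certain nodes of $C^*$) and, dually, cusps of $C^*$ correspond to inflexions of $C$; the genericity hypotheses in Definition~\ref{def:general_curve} guarantee these are all ordinary and unmixed, so the count ``at most one singular point of $X$ over each node of $C$'' and ``at most one over each cusp of $C^*$'' follows, the word ``at most'' reflecting that the node or cusp may be good.

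I expect the main obstacle to be purely bookkeeping rather than deep: carefully matching the three kinds of singular points of $C^*$ (smooth points where $L$ may meet it, nodes, cusps) with the classical dictionary between $C$ and $C^*$, and making sure the phrase ``over each node of $C$'' in the corollary is correctly interpreted. One should double-check that no inflexionary bitangents or other coincidences occur, which is exactly what the three bullet conditions in Definition~\ref{def:general_curve} rule out. The extension and local-model parts are immediate citations of the machinery already developed in Section~\ref{sec:plastering}, so no new calculation is needed there.
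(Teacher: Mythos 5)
Your argument is correct in substance and is exactly the paper's route: the factorization hypothesis produces a topological covering of $(\PP^2)^*\setminus C^*$ extending $\pi$, transversality of $L$ together with simple ramification of $\pi$ makes this covering generic via Proposition~\ref{prop:same_mono}, the Grauert--Remmert theorem and the GAGA statement give the unique finite extension $f\colon X\to(\PP^2)^*$ with $X$ normal and projective, and the local structure of $X$ is read off from Proposition~\ref{not.smooth.over.bad} with $D=C^*$; the extra care you take in matching $\left.f\right|_{f^{-1}(L)}$ with $\pi$ over the branch points is a detail the paper leaves implicit, and your justification of it is fine.

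One correction to your closing ``dictionary'' paragraph: nodes of $C$ correspond to bitangent \emph{lines} of $C^*$, and a bitangent line of $C^*$ is not a node of $C^*$ (it is the nodes of $C^*$ that correspond to bitangent lines of $C$), so your parenthetical ``i.e.\ to certain nodes of $C^*$'' is false. No such translation is needed: the $A_1$-points of $X$ lie over the (bad) nodes of $C^*$ and the $A_2$-points over the (bad) cusps of $C^*$, exactly as Proposition~\ref{not.smooth.over.bad} states, and the phrase ``over each node of $C$'' in the corollary should simply be read as ``over each node of $C^*$'' (points of $X$ lie over the dual plane, so lying ``over'' a point of $\PP^2$ has no literal meaning here). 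With that reading the last assertion of the corollary is an immediate citation of Proposition~\ref{not.smooth.over.bad}, with ``at most'' reflecting, as you say, that a node or cusp of $C^*$ may be good.
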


 \begin{proof}
Since the monodromy $\pi_1(L\setminus C^*,t_0)\to\Perm(\pi^{-1}(t_0))$ 
factors through the homomorphism $\pi_1(L\setminus C^*,t_0)\to 
\pi_1(\PP^2\setminus C^*,t_0)$, there exists a topological covering $X_0\to 
(\PP^2)^*\setminus C^*$ of degree $\deg\pi$ extending the covering 
$\pi^{-1}(L\setminus C^*)\to L\setminus C^*$. The restriction of $f_0$ to 
the line transversal to $C^*$ has simple ramification, so $f_0$ is a 
generic covering by Proposition \ref{prop:same_mono}. The 
rest follows from Proposition~\ref{not.smooth.over.bad} with $D = 
C^*$. 
 \end{proof}

\section{Smooth surfaces simply ramified over duals to general nodal 
curves}\label{sec:PL}

In this section we prove the $(\mathrm b) \Rightarrow 
(\mathrm a)$ implication in Theorem~\ref{th.main}.

Throughout this section we will be working in the following setting. 
$C\subset\PP^2$ is a nodal curve that is general enough in the sense of 
Definition~\ref{def:general_curve}, $C^*\subset(\PP^2)^*$ is the dual 
curve, $X$ is a smooth projective surface, $f\colon X\to(\PP^2)^*$ is a 
finite holomorphic mapping with simple ramification and branch locus~$C^*$. 
Recall that the smooth surface $X_C$ and the mapping $f_C\colon 
X_C\to(\PP^2)^*$ were defined by equations~\eqref{def_of_X_C} 
and~\eqref{def:f_C}, respectively.

For the point $p \in \PP^2$ denote by $\pr_p\colon C \to p^\perp = \PP^1$ 
the projection of $C$ from $p$. Also denote by $\nu\colon\hatC\to C$ the 
normalization mapping. Suppose that $\Phi\colon X\to X_C$ is an isomorphism 
such that $f_C\circ\Phi=f$. 
 \begin{proposition}\label{final_step}
Let $p$ be such that the composition $\pr_p\colon C \to p^\perp$ has simple 
ramification. Then for $C' \mathrel{{:}{=}} f^{-1}(p^\perp)$ there exists 
an isomorphism $\ph\colon \hatC\to C'$ such that $f \circ 
\ph=\pr_p\circ\nu$.
 \end{proposition}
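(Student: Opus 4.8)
The plan is to produce $\ph$ by transporting, through the given isomorphism $\Phi$, the tautological identification of $\hatC$ with $f_C^{-1}(p^\perp)$ supplied by Proposition~\ref{gen(a)=>(b)}.

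First I would apply Proposition~\ref{gen(a)=>(b)} to the same nodal curve $C$, the same normalization $\nu\colon\hatC\to C$, and the same point $p$ (the hypothesis that $\pr_p$ has simple ramification already presupposes $p\in\PP^2\setminus C$). This yields an isomorphism $\psi\colon\hatC\to f_C^{-1}(p^\perp)$ with $f_C\circ\psi=\pr_p\circ\nu$; concretely, $\psi$ sends a point $x\in\hatC$ to the pair $(x,t)\in X_C$, where $t\in(\PP^2)^*$ is the line joining $p$ and $\nu(x)$.

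Next I would use the relation $f=f_C\circ\Phi$, which is equivalent to $f_C=f\circ\Phi^{-1}$. It gives $\Phi^{-1}\bigl(f_C^{-1}(p^\perp)\bigr)=f^{-1}(p^\perp)=C'$, so $\Phi^{-1}$ restricts to an isomorphism $f_C^{-1}(p^\perp)\to C'$. Setting $\ph\mathrel{{:}{=}}\Phi^{-1}\circ\psi$, I obtain an isomorphism $\hatC\to C'$ satisfying $f\circ\ph=(f\circ\Phi^{-1})\circ\psi=f_C\circ\psi=\pr_p\circ\nu$, which is exactly the required identity.

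I do not expect any genuine obstacle here: all the substance of the implication $(\mathrm b)\Rightarrow(\mathrm a)$ is contained in the construction of the isomorphism $\Phi\colon X\to X_C$ over $(\PP^2)^*$, which has already been carried out in the present section, and Proposition~\ref{final_step} merely records the resulting compatible identification of $\hatC$ with $C'=f^{-1}(p^\perp)$. (In passing one notes that $C'$ is automatically smooth and irreducible, being isomorphic through $\Phi$ to $f_C^{-1}(p^\perp)\cong\hatC$, with $C$ irreducible by Definition~\ref{def:general_curve}.)
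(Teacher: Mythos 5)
Your proof is correct and is essentially the paper's own argument: the paper simply says $\ph$ is the restriction of $\Phi^{-1}$ to $f_C^{-1}(p^\perp)$, tacitly using the identification $\hatC\cong f_C^{-1}(p^\perp)$ from Proposition~\ref{gen(a)=>(b)}, which you have merely made explicit by writing $\ph=\Phi^{-1}\circ\psi$ and checking $f\circ\ph=\pr_p\circ\nu$.
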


\begin{proof}
The required isomorphism $\ph$ is just the restriction of $\Phi^{-1}$
to $f_C^{-1}(p^\perp)$.
\end{proof}

 \begin{proposition}\label{Chisini_argument}
The $(\mathrm b) \Rightarrow (\mathrm a)$ implication in 
Theorem~\ref{th.main} holds in each of the following cases.

\textup{(1)} $\deg C\ge7$;

\textup{(2)} $\deg C=6$ and $C$ has at least one node;

\textup{(3)} $\deg C=5$ and $C$ has at least three nodes.
 \end{proposition}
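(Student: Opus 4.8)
The plan is to reduce the implication $(\mathrm b)\Rightarrow(\mathrm a)$ under the stated degree hypotheses to the validity of the Chisini conjecture for the specific branch curve $C^*$, and then invoke Kulikov's theorem. Concretely, suppose (b) holds: we have a smooth projective surface $X$, a finite morphism $f\colon X\to(\PP^2)^*$ ramified exactly over $C^*$, and an isomorphism $\ph\colon C'\to f^{-1}(p^\perp)$ with $f|_{f^{-1}(p^\perp)}\circ\ph=\pi$. Since $f$ has $C^*$ as branch locus and $p^\perp$ is transversal to $C^*$ (because $\pr_p\circ\nu$ has simple ramification), the restriction $f|_{f^{-1}(p^\perp)}\colon f^{-1}(p^\perp)\to p^\perp$ is simply ramified over $p^\perp\cap C^*$; combined with the hypothesis on $\pi$ this forces $f$ itself to be a generic covering in the sense of \cite{Kulikov1999} (i.e.\ the restriction to a general transversal line is simply ramified) — this is exactly Proposition~\ref{prop:same_mono}. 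Thus $f\colon X\to(\PP^2)^*$ is a generic covering with smooth $X$, branched over the cuspidal curve $C^*$.

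Next I would check that $C^*$ satisfies the hypotheses of Kulikov's Theorem~10 from \cite{Kulikov1999} in each of the three cases. Recall that the dual of a general nodal plane curve $C$ of degree $d$ has only nodes (dual to bitangent lines of $C$) and standard cusps (dual to ordinary inflexion lines of $C$) as singularities, which is guaranteed by our genericity Definition~\ref{def:general_curve}. Using the Plücker formulas one computes $\deg C^*=d^*=d(d-1)-2\delta$ where $\delta$ is the number of nodes of $C$, together with the number $\kappa$ of cusps of $C^*$ (equal to the number of inflexions of $C$, which for general $C$ is $3d(d-2)-6\delta$) and the number of nodes of $C^*$ (the bitangents). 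In each of cases (1)–(3) the degree $d^*$ of the branch curve and its numerical invariants fall into the range where Kulikov's bound (roughly $\deg f$ not too large relative to $d^*$, or rather his positivity condition on the relevant divisor class) is satisfied, so that any generic covering with smooth total space branched over $C^*$ must be equivalent to a generalized projection from a point — i.e.\ $X\cong X_C$ over $(\PP^2)^*$. This is where the case division matters: $d\ge 7$ already makes $d^*$ large enough for a smooth $C$, while for $d=6$ we need one node (to lower $d^*$ and simultaneously keep $\kappa$ in range) and for $d=5$ we need at least three nodes. The bookkeeping here — verifying Kulikov's inequality with the Plücker data in each regime — is essentially the main computational obstacle, but it is routine once the formulas are written down.

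Finally, granted $X\cong X_C$: the isomorphism provided by Kulikov's theorem is an isomorphism $\Phi\colon X\to X_C$ commuting with the projections, i.e.\ $f_C\circ\Phi=f$. Then Proposition~\ref{final_step} applies verbatim and yields an isomorphism $\ph'\colon\hatC\to f^{-1}(p^\perp)$ with $f\circ\ph'=\pr_p\circ\nu$. Composing with the isomorphism $\ph\colon C'\to f^{-1}(p^\perp)$ from (b) gives $\ph^{-1}\circ\ph'\colon\hatC\to C'$, and tracing through $(\pr_p\circ\nu)\circ(\ph')^{-1}=f|_{f^{-1}(p^\perp)}=\pi\circ\ph^{-1}$ shows that $\pi$ is equivalent to $\pr_p\circ\nu$, which is condition~(a). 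The only delicate point in this last step is that Kulikov's conclusion genuinely delivers a fibered isomorphism (not merely an abstract isomorphism of surfaces); this follows from the uniqueness part of the Grauert–Remmert extension, since both $X$ and $X_C$ restrict over $(\PP^2)^*\setminus C^*$ to the same covering (they have the same fiber monodromy — namely that of $\pi$, since $\pi$ factors through $\pi_1(\PP^2\setminus C^*)$ in both cases), so the normalizations agree and the isomorphism is forced to commute with $f$ and $f_C$.
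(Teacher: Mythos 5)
You follow the same route as the paper: its entire proof consists of quoting Theorem~10 of \cite{Kulikov1999}, which asserts that under hypotheses (1)--(3) a generic covering of $(\PP^2)^*$ branched over $C^*$ is unique, so that $f$ is equivalent to $f_C$, and then applying Proposition~\ref{final_step}; your reduction of condition (b) to this situation via Proposition~\ref{prop:same_mono} is the same (implicit) step, though note that Kulikov's genericity at the nodes and cusps of $C^*$ uses the smoothness of $X$ through Proposition~\ref{not.smooth.over.bad}, not only simple ramification over one transversal line. Two comments. First, the Pl\"ucker-formula ``bookkeeping'' you defer is not actually needed: Kulikov's Theorem~10 is stated directly for branch curves dual to generic nodal curves, and its hypotheses are literally cases (1)--(3), so the case division in the proposition is a transcription of his hypotheses rather than the outcome of an inequality you must re-verify. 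Second, your closing argument that the isomorphism is fibered is circular: you claim that $f$ and $f_C$ restrict over $(\PP^2)^*\setminus C^*$ to coverings with ``the same fiber monodromy, namely that of $\pi$'', but the monodromy of $f_C$ over $p^\perp$ is that of $\pr_p\circ\nu$, and its agreement with the monodromy of $\pi$ is precisely conclusion (a); if that reasoning were valid it would prove the implication with no degree restrictions at all, contradicting Remark~\ref{chisini:example}. Fortunately the worry it addresses does not arise: equivalence of coverings in Kulikov's theorem means by definition an isomorphism $\Phi\colon X\to X_C$ with $f_C\circ\Phi=f$, so Proposition~\ref{final_step} applies directly and your final composition of isomorphisms goes through.
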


 \begin{proof}
Theorem 10 from~\cite{Kulikov1999} says that if $C$ satisfies the 
hypothesis of the proposition, then a finite mapping $f\colon 
X\to(\PP^2)^*$ with simple ramification over $C^*$ is unique. Thus, the 
mapping $f\colon X\to (\PP^2)^*$ is equivalent to $f_C\colon 
X_C\to(\PP^2)^*$, and Proposition~\ref{final_step} applies. 
 \end{proof}

The rest of this section is devoted to the proof of the $(\mathrm b)
\Rightarrow (\mathrm a)$ implication in Theorem~\ref{th.main} in the
remaining cases. This proof is independent of the results of
\cite{Kulikov1999} and works for all the cases in
Theorem~\ref{th.main}, including those covered by
Proposition~\ref{Chisini_argument}.

It follows from the hypotheses on $C$ that the dual curve 
$C^*\subset(\PP^2)^*$ has no inflexion points, all the singularities of 
$C^*$ are nodes and standard cusps, and no line is tangent to $C^*$ at 
three points (bitangents of $C^*$ correspond to nodes of~$C$).

 \begin{proposition}\label{smooth_f^-1}
Suppose that $L\subset(\PP^2)^*$ is a line such that the point $L^\perp$ 
does not lie on $C$. Then the curve $f^{-1}(L)\subset X$ is smooth.
 \end{proposition}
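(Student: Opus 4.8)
The plan is to understand when $f^{-1}(L)$ can fail to be smooth. Since $f\colon X\to(\PP^2)^*$ is a finite morphism from a smooth surface with simple ramification over $C^*$, the structure of $f$ near the ramification locus is governed by Proposition~\ref{not.smooth.over.bad} (with $D=C^*$, and noting that in the present situation $X$ is smooth, so no bad nodes or bad cusps of $C^*$ occur). A point $x\in f^{-1}(L)$ is a singular point of the scheme-theoretic fibre $f^{-1}(L)$ precisely when $L$ fails to meet the branch curve $C^*$ transversally \emph{as seen through} $f$ at $x$: either $f(x)\in L\cap C^*$ and $L$ is tangent to $C^*$ at $f(x)$, or $x$ is a ramification point of $f$ and $L$ passes through the critical value $f(x)$ in a direction along which $f^{-1}(L)$ acquires higher multiplicity. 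So the real content is a local analysis at the preimages of $L\cap C^*$.

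First I would reduce to the local models. Away from $R=$ ramification locus of $f$, the morphism $f$ is \'etale, so $f^{-1}(L)$ is locally isomorphic to $L$ and hence smooth; thus only points of $f^{-1}(L)\cap R$ can be problematic, and these lie over $L\cap C^*$. Now $L$ is a line and $L^\perp\notin C$. I claim this condition says exactly that $L$ is transversal to $C^*$ in the following strong sense: $L$ meets $C^*$ only at smooth points of $C^*$, and at each such point $L$ is not the tangent line to $C^*$. Indeed, by biduality the singular points of $C^*$ are the tangent lines to $C$ that are either bitangents (nodes of $C^*$) or tangent at inflexions (cusps of $C^*$), and a point $\xi\in C^*$ lies on $L$ exactly when the corresponding line $\xi^\perp\subset\PP^2$ passes through $L^\perp$; the tangent line to $C^*$ at a smooth point $\xi$ is $x^\perp$ where $x\in C$ is the point whose tangent is $\xi^\perp$, so $L$ is tangent to $C^*$ at $\xi$ iff $L^\perp=x\in C$. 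Hence $L^\perp\notin C$ forces $L$ to avoid the singular locus of $C^*$ and to be nowhere tangent to $C^*$, i.e.\ $L$ is transversal to $C^*$ in the sense of the Notation section. (One should note $L^\perp\notin C$ also rules out $L$ passing through a node of $C^*$, since nodes of $C^*$ are bitangents of $C$ and a bitangent $\xi^\perp$ contains $L^\perp$ iff $L$ is tangent to $C^*$ there, again giving $L^\perp\in C$; and similarly for cusps.)

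Next, with $L$ transversal to $C^*$, I invoke case~\ref{It:Smooth} of Proposition~\ref{not.smooth.over.bad}: at a point $q\in L\cap C^*$ (necessarily a smooth point of $C^*$), $f^{-1}(q)$ consists of $d-1$ points, at all of which $X$ is smooth, and at the unique ramification point among them $f$ has the local normal form $f(x,y)=(x,y^2)$ with branch locus $\{y=0\}$, which we may identify locally with $C^*$. Since $L$ is transversal to $C^*$ at $q$, in these coordinates $L$ is a smooth curve meeting $\{y=0\}$ transversally at the origin, so after an analytic change of coordinates $L=\{y=0\}$ and its preimage is $\{y^2=0\}$ — wait, that is non-reduced. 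More carefully: transversality of $L$ to $C^*=\{y=0\}$ means $L=\{x=c\,y+\dots\}$ for a local parameter, or better $L$ is a graph $x=g(y)$ with $g$ holomorphic; then $f^{-1}(L)=\{x=g(y^2)\}$ near the ramification point, which is the graph of a holomorphic function of $y$, hence smooth. At the other $d-2$ points of $f^{-1}(q)$, $f$ is a local isomorphism and smoothness is immediate. Combining: $f^{-1}(L)$ is smooth at every point of $f^{-1}(L\cap C^*)$, and elsewhere $f$ is \'etale, so $f^{-1}(L)$ is smooth everywhere.

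The main obstacle I anticipate is the biduality bookkeeping in the second paragraph — correctly translating ``$L^\perp\notin C$'' into ``$L$ transversal to $C^*$,'' including handling the cases where $L$ might a priori pass through a node or a cusp of $C^*$. This requires being careful with the generic-curve hypotheses on $C$ (Definition~\ref{def:general_curve}) and the explicit description of $C^*$'s singularities given just before Proposition~\ref{smooth_f^-1}; the local smoothness computation itself, once transversality is established, is the routine case~\ref{It:Smooth} normal form and should be dispatched quickly.
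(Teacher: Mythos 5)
There is a genuine gap in your duality bookkeeping, and it hides exactly the cases that carry the content of the proposition. From $L^\perp\notin C$ and biduality $C=(C^*)^*$ you may only conclude that $L$ is \emph{not tangent} to $C^*$ anywhere (not tangent at a smooth point, not tangent to a branch at a node, not the limiting tangent at a cusp) --- this is what the paper's proof extracts. You may \emph{not} conclude that $L$ avoids the singular points of $C^*$. Indeed, if $\xi$ is a node of $C^*$, i.e.\ $\xi^\perp$ is a bitangent line of $C$, then $L\ni\xi$ is equivalent to $L^\perp\in\xi^\perp$, and the bitangent $\xi^\perp$ is not contained in $C$; so for any point $L^\perp$ of the bitangent other than its finitely many intersection points with $C$, the line $L$ passes through the node of $C^*$ while $L^\perp\notin C$ (tangency of $L$ to a branch at the node corresponds to $L^\perp$ being one of the two points of tangency of the bitangent, which do lie on $C$). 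The same happens at cusps: $L$ through a cusp of $C^*$ corresponds to $L^\perp$ on the inflexional tangent of $C$, and only the choice $L^\perp=$ the inflexion point itself makes $L$ the cuspidal tangent. So your claimed equivalence ``$L^\perp\notin C$ iff $L$ is transversal to $C^*$'' is false, and your argument only proves the weaker statement for transversal lines.

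The omitted cases are precisely where one must use that $X$ is smooth: by Proposition~\ref{not.smooth.over.bad} all nodes and cusps of $C^*$ are then good, and one checks smoothness of $f^{-1}(L)$ in the good local models. Over a good node this reduces to the same normal form $f(x,y)=(x,y^2)$ at each of the two ramification points (each sees one branch, to which $L$ is not tangent), so your graph computation applies. Over a good cusp one works in the model $X=\{z^3+3xz+2y=0\}$, $f(x,y,z)=(x,y)$, with branch curve $x^3+y^2=0$ and cuspidal tangent $\{y=0\}$: for a line $\{y=cx\}$, $c\ne0$, or $\{x=0\}$ through the cusp, the preimage $\{z^3+3xz+2cx=0\}$ (resp.\ $\{x=0\}$) is smooth at the origin of the $(x,z)$-chart since $\partial/\partial x$ gives $3z+2c\ne0$; only the cuspidal tangent $\{y=0\}$, which is excluded because it would force $L^\perp\in C$, produces a singular fibre. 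This stronger statement is what is actually needed later: the monodromy construction around Proposition~\ref{finite_monodromy} requires local triviality of $\pr_1$ over all of $\PP^2\setminus C$, including points $L^\perp$ on bitangents and inflexional tangents away from $C$; with your weaker version one would only get an action of the fundamental group of the complement of $C$ together with finitely many extra lines, which is no longer abelian, and the finiteness argument would break.
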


 \begin{proof}
By virtue of projective duality one has $C=(C^*)^*$. Hence, for a line $L 
\subset (\PP^2)^*$, one has $L^\perp \in C$ if and only if $L$ either is 
tangent to $C^*$ at a smooth point, or is tangent to a branch of $C^*$ at a 
node (we call such $L$ a tangent to $C$ at the node), or is the (limiting) 
tangent to $C$ at a cusp. In the third case $L^\perp$ is an inflexion point 
of $C$, in the second case it lies on a double tangent to $C$, and in the 
first case neither takes place. Now the result follows immediately from 
cases \ref{It:Smooth}, \ref{It:GoodCusp} of 
Proposition~\ref{not.smooth.over.bad}, where one puts $D=C^*$.
 \end{proof}

 \begin{proposition}\label{singular_f^-1}
Suppose that a line $L\subset(\PP^2)^*$ is tangent to $C^*$ (at a smooth 
point, a node, or a cusp). Then the curve $f^{-1}(L)$ has singular points 
only over the points of tangency; moreover, there is exactly one singular 
point of $f^{-1}(L)$ over each point of tangency, and this singular point 
is a node. 
 \end{proposition}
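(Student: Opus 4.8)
The plan is to reduce everything to a local analysis at the points of $L\cap C^*$, using the explicit normal forms for $f$ supplied by Proposition~\ref{not.smooth.over.bad} with $D=C^*$. Since $f$ is \'etale over $(\PP^2)^*\setminus C^*$ and $L$ is a smooth curve, $f^{-1}(L)$ can only be singular over points $q\in L\cap C^*$; moreover, at a point $a\in f^{-1}(q)$ at which $f$ is a local isomorphism the curve $f^{-1}(L)$ is locally isomorphic to $L$ near $q$, hence smooth there. So it suffices to examine, for each $q\in L\cap C^*$, the finitely many ramification points of $f$ lying over $q$. Two preliminary remarks set the stage. First, because $X$ is smooth, $C^*$ has no bad nodes and no bad cusps with respect to the covering $f_0=f|_{f^{-1}((\PP^2)^*\setminus C^*)}$ (a bad node or bad cusp would force a Du~Val singularity on $X$ by cases \ref{It:BadNode} and \ref{It:BadCusp} of Proposition~\ref{not.smooth.over.bad}), so only cases \ref{It:Smooth}, \ref{It:GoodNode} and \ref{It:GoodCusp} of that proposition occur over points of $C^*$. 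Second, since $C$ is general enough, $C^*$ has no inflexion points, so the tangent line to $C^*$ at a smooth point, or to a branch of $C^*$ at a node, has contact of order exactly~$2$, while the cuspidal tangent at a (standard) cusp has contact of order exactly~$3$.

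Next I would treat the smooth-point and good-node cases together. At a ramification point $a$ over $q$ in case~\ref{It:Smooth} or~\ref{It:GoodNode}, Proposition~\ref{not.smooth.over.bad} furnishes local analytic coordinates $(x,y)$ on $X$ near $a$ and $(u,v)$ on $(\PP^2)^*$ near $q$ with $f(x,y)=(x,y^2)$ and with the local branch of $C^*$ passing through $q$ equal to $\{v=0\}$. (In the good-node case $C^*$ has a second branch through $q$, but the two branches of a node are transverse, so a line through $q$ is tangent to at most one of them.) If $L$ is transverse to the branch $\{v=0\}$ at $q$, then in these coordinates $L$ has an equation $u=\psi(v)$ with $\psi(0)=0$, whence $f^{-1}(L)=\{x=\psi(y^2)\}$ is smooth at $a$. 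If instead $L$ is tangent to $\{v=0\}$ at $q$, then, the contact being exactly $2$, $L$ has an equation $v=au^2+O(u^3)$ with $a\ne0$, so $f^{-1}(L)=\{y^2-ax^2-O(x^3)=0\}$ has a node at $a$ and nothing worse. Together with the observation that $f^{-1}(q)$ contains exactly one ramification point of $f$ in case~\ref{It:Smooth} and exactly two (one over each branch) in case~\ref{It:GoodNode}, this shows that over a smooth point of $C^*$, resp.\ over a node of $C^*$, lying on $L$, the curve $f^{-1}(L)$ is smooth unless $L$ is tangent to $C^*$ there, in which case it acquires exactly one node, located at the unique ramification point of $f$ over the branch to which $L$ is tangent.

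Then I would handle the cusp case. At the ramification point $a$ over a good cusp $q$, case~\ref{It:GoodCusp} of Proposition~\ref{not.smooth.over.bad} gives coordinates in which $X=\{(x,y,z)\in\Delta^3\mid z^3+3xz+2y=0\}$ and $f(x,y,z)=(x,y)$; eliminating $y$ identifies $X$ near $a$ with the $(x,z)$-plane, on which $f(x,z)=(x,-\tfrac12(z^3+3xz))$, with branch locus $\{x^3+y^2=0\}$ whose tangent cone at $q$ is the double line $\{y=0\}$. A line $L$ through $q$ that is not the cuspidal tangent has contact $2$ with $C^*$, hence its tangent direction at $q$ is not $\{y=0\}$; then $L$ has an equation $x=\psi(y)$ with $\psi(0)=0$, and the implicit function theorem (applied to $x-\psi(-\tfrac12(z^3+3xz))$, whose $x$-derivative is $1$ at $a$) shows that $f^{-1}(L)$ is a smooth graph over $z$ near $a$; this is consistent with Proposition~\ref{smooth_f^-1}, since such an $L$ has $L^\perp\notin C$. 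If, on the other hand, $L$ is the cuspidal tangent of $C^*$ at $q$ — the only line through $q$ with $L^\perp\in C$ — then $L$ has contact $3$ with $C^*$, its tangent direction at $q$ is $\{y=0\}$, and $L$ has an equation $y=\phi(x)$ with $\phi(x)=O(x^2)$, say $\phi(x)=bx^2+O(x^3)$. Hence $f^{-1}(L)=\{z^3+3xz+2\phi(x)=0\}$, whose quadratic part is $x(3z+2bx)$, a product of two distinct linear forms, so $f^{-1}(L)$ has exactly one node at $a$ and nothing worse. Since $f^{-1}(q)$ contains a single ramification point of $f$ in this case, combining the three cases yields the statement.

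The step I expect to be the main obstacle is the cusp case: one must pin down the local model of $f$ near a good cusp, recognize the cuspidal tangent as precisely the line of maximal contact distinguished there, and carry out the two contact-order computations carefully enough to be certain that no singularity worse than a node appears and that a generic line through a cusp produces a smooth fibre. The smooth-point and good-node cases, by contrast, are short once the normal forms of Proposition~\ref{not.smooth.over.bad} are available, and the reduction to a purely local question is immediate from \'etaleness of $f$ away from $C^*$.
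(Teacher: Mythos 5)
Your proposal is correct and follows essentially the same route as the paper: the paper's own (very terse) proof likewise notes that $C^*$ has no inflexion points because $(C^*)^*=C$ has no cusps and then appeals to the local models of Proposition~\ref{not.smooth.over.bad}, which is exactly the reduction and case analysis (smooth point, good node, good cusp, bad cases excluded by smoothness of~$X$) that you carry out in detail. Your explicit contact-order computations in local coordinates are a faithful expansion of what the paper leaves to the reader.
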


 \begin{proof}
Since the curve $(C^*)^*=C$ has no cusps, the curve $C^*$ has no inflexion 
points; now everything follows from Proposition~\ref{not.smooth.over.bad}. 
 \end{proof}

Let the line $L\subset(\PP^2)^*$, where $L^\perp\notin C$, vary. 
Proposition~\ref{smooth_f^-1} shows that the curve $f^{-1}(L)$ is smooth 
for such $L$, so variation of $L$ induces a monodromy action of 
$\pi_1(\PP^2 \setminus C)$ on $H^1(f^{-1}(L),\Z)$. More formally, consider 
the incidence variety
 \begin{equation}\label{inc.var} 
I=\{(p,x)\in \PP^2\times X\colon f(x)\in p^\perp\} 
 \end{equation} 
and denote by $\pr_1: I \to \PP^2$ and $\pr_2: I \to X$ the projections. 
For each $p\in\PP^2\setminus C$ the fiber $\pr_1^{-1}(p)$ is isomorphic to 
$f^{-1}(p^\perp)$. It follows from Proposition~\ref{smooth_f^-1} that the 
derivative of $\pr_1$ has maximal rank everywhere on 
$\pr_1^{-1}(\PP^2\setminus C)$, whence $\pr_1$ restricts to a locally 
trivial fibration over the preimage of $\PP^2\setminus C$. So, for a point 
$q \in \PP^2\setminus C$ the fundamental group $\pi_1(\PP^2\setminus C,q)$ 
acts on $H^1(f^{-1}(q^\perp),\Z)$; this is the monodromy action in 
question.

 \begin{proposition}\label{finite_monodromy}
The image of the monodromy action of $\pi_1(\PP^2 \setminus C,q)$ on
$H^1(f^{-1}(q^\perp),\Z)$ is cyclic of order dividing $d=\deg C$.
 \end{proposition}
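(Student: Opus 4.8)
The plan is to identify the monodromy action on $H^1(f^{-1}(q^\perp),\Z)$ with a permutation representation coming from the fiber monodromy of $f$ itself, and then read off the answer. First I would fix a general line $L_0=q_0^\perp$ transversal to $C^*$ and observe that, since $f$ is finite with branch locus $C^*$, the curve $f^{-1}(L_0)$ is a $d$-sheeted cover of $L_0\cong\PP^1$ branched over $L_0\cap C^*$ with simple ramification (Proposition~\ref{smooth_f^-1} gives smoothness; simple ramification is part of the standing hypotheses, and is preserved by Proposition~\ref{prop:same_mono}). The monodromy action of $\pi_1(\PP^2\setminus C,q_0)$ on the fiber $\pr_1^{-1}(q_0)\cong f^{-1}(L_0)$ preserves the map to $L_0$ up to the induced motion of $L_0$, hence it sends sheets to sheets; more precisely, the whole incidence picture~\eqref{inc.var} comes equipped with the auxiliary finite map $\pr_2\colon I\to X$ composed with $f$, and the local system $R^1(\pr_1)_*\Z$ on $\PP^2\setminus C$ is built out of the $d$-sheeted covering $X_0\to(\PP^2)^*\setminus C^*$ by the Leray/Gysin mechanism for a family of branched covers of $\PP^1$.

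The key step is a Hurwitz-type description of $H^1$ of a simply ramified $d$-sheeted cover of $\PP^1$ together with the action of deformations of the branch configuration. I would use the classical fact that for such a cover $\phi\colon Y\to\PP^1$ with branch set $B$ and monodromy $\rho\colon\pi_1(\PP^1\setminus B)\to S_d$, the vanishing cohomology $H^1(Y,\Z)$ is the quotient of the ``relative'' lattice spanned by vanishing cycles (one for each branch point) by the image of $H_0$ of the fiber; concretely, $H^1(Y,\Z)$ is computed from the chain complex $\Z^B\to\Z^d$ whose map sends the $i$-th basis vector to $e_{a_i}-e_{b_i}$, where $(a_i\,b_i)$ is the transposition at the $i$-th branch point. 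Monodromy of the configuration $B$ (i.e. of $\pi_1$ of the space of transversal lines, which surjects onto $\pi_1(\PP^2\setminus C)$ via $L\hookrightarrow(\PP^2)^*$ by the Lefschetz-type statements already used) acts on $\Z^B$ by the braid-group/Hurwitz action and on $\Z^d$ through the permutation representation of the fiber monodromy of $X_0\to(\PP^2)^*\setminus C^*$; passing to the quotient, the action on $H^1(f^{-1}(q^\perp),\Z)$ factors through the permutation action of $\pi_1(\PP^2\setminus C)$ on the $d$-element fiber, i.e. through a subgroup of $S_d/\text{(sign part)}$ — and in fact, after the quotient by $e_{a}+\dots$, it factors through the \emph{cyclic} quotient $\det\colon S_d\to\{\pm1\}$ tensored with… no: the point is that the \emph{connected} monodromy group of such a family of branched covers, acting on vanishing cohomology of a $\PP^1$-cover, is cyclic — this is where the bound $d$ enters, via the cycle structure of the permutations.

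Concretely, the order-dividing-$d$ bound I would get as follows: the cokernel of $\Z^B\xrightarrow{(a_i-b_i)}\Z^d$ is $\Z$ (since the cover is connected, the image is exactly $\{\sum c_je_j:\sum c_j=0\}$), so $H^1(f^{-1}(q^\perp),\Z)$ has a distinguished $\Z$-quotient, and the monodromy acts on the ambient $\Z^d$ by permutation of coordinates, hence acts \emph{trivially} on that $\Z$-quotient $\Z^d/(\text{sum-zero})$; dualizing, there is a distinguished primitive sublattice on which the action is trivial, and the quotient action is by a subgroup of $\mathrm{Sym}$ that preserves the intersection form and fixes a vector — one checks this forces the image to be cyclic, generated by the image of a generator of $\pi_1(\PP^2\setminus C)\cong\Z/d$ (the fundamental group of the complement of a degree-$d$ curve surjects onto $\Z/d$ and, after imposing the genericity so that $C^*$ has only nodes and cusps, $\pi_1(\PP^2\setminus C)$ is generated by a single meridian up to the relations, so its abelianization is exactly $\Z/d$); since the monodromy action on $H^1$ factors through a representation that kills the commutator subgroup (as it lands in an abelian group — this is the crux), it factors through $H_1(\PP^2\setminus C)=\Z/d$, giving a cyclic image of order dividing $d$.

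The main obstacle, and the step I expect to require the most care, is proving that the monodromy action on $H^1(f^{-1}(q^\perp),\Z)$ is through an \emph{abelian} — hence cyclic-of-order-dividing-$d$ — quotient, rather than merely showing it lands in $S_d$. The honest way to see this is: the local system $R^1(\pr_1)_*\Z$ over $\PP^2\setminus C$ is, up to the $H^0$-correction, the pushforward to $\PP^2$ of the constant sheaf along the finite unramified $X_0\to(\PP^2)^*\setminus C^*$, relativized over the $\PP^1$-fibration; but any such ``norm'' construction has monodromy that acts only by permuting the $d$ summands, and the summands correspond precisely to the $d$ sheets, whose monodromy permutation is the same as the fiber monodromy of $f$, which in turn — since the generic line $L$ meets $C^*$ transversally and $\pi_1(\PP^2\setminus C)\to S_d$ is determined by the conjugacy class of a single meridian — has image a cyclic group generated by a $d$-cycle (the branch cycle over a generic tangent line is a single $d$-cycle, by genericity of $C$). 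Chasing this identification through the Gysin sequence for the family, the induced action on the \emph{reduced} part of $H^1$ is then visibly by that same cyclic group, establishing the claim; the delicate bookkeeping is in the Gysin/Leray identification and in checking the permutation really is by a $d$-cycle (equivalently, that $X$ is connected, which follows since $X$ is a smooth projective surface dominating $(\PP^2)^*$ and we may assume it irreducible).
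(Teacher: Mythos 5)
Your route misses the one essential input and substitutes for it a circular step. The paper's proof is two lines: by Deligne's theorem on complements of nodal curves (the reference \cite{Del} in the paper), the group $\pi_1(\PP^2\setminus C,q)$ is \emph{abelian}, hence isomorphic to $H_1(\PP^2\setminus C,\Z)\cong\Z/d$ for an irreducible curve of degree $d$; therefore any homomorphic image of it, in particular the monodromy image in $\Aut\bigl(H^1(f^{-1}(q^\perp),\Z)\bigr)$, is cyclic of order dividing $d$. In your argument the decisive assertion is that the representation ``kills the commutator subgroup (as it lands in an abelian group --- this is the crux)''; but it lands in $\Aut(H^1(f^{-1}(q^\perp),\Z))$, which is not abelian, so nothing you have said forces the action to factor through the abelianization $\Z/d$. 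That factorization is exactly equivalent to what has to be proved, and the only known way to get it here is the (nontrivial) abelianity of $\pi_1(\PP^2\setminus C)$ for nodal $C$, which you neither cite nor prove.

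The structural claims you use to try to manufacture abelianity are also incorrect. The fiber monodromy of $f$ (equivalently of the covering $X_0\to(\PP^2)^*\setminus C^*$) is generated by transpositions, one for each meridian of $C^*$ --- that is what simple ramification/genericity means --- and for connected $X$ its image is typically the full symmetric group $S_d$, not a cyclic group; there is no ``single $d$-cycle over a generic tangent line.'' Likewise the monodromy of $\pi_1(\PP^2\setminus C)$ on $H^1$ of the fibre is not induced by permuting $d$ summands of a pushforward sheaf: its natural generators are Picard--Lefschetz transvections along vanishing cycles (this is precisely how the paper exploits the proposition later, in Proposition~\ref{f(-1)(tangent)}, where finiteness of the image forces the vanishing cycles to be zero), and such transvections are invisible in any ``permutation of sheets'' picture. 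Finally, the complex $\Z^B\to\Z^d$, $e_i\mapsto e_{a_i}-e_{b_i}$, computes the homology of a graph and not $H^1$ of the branched cover (its Betti numbers do not match Riemann--Hurwitz), so the Hurwitz-type description and the Gysin/Leray identification built on it cannot be carried through as stated. The correct and short argument is: Deligne's theorem gives $\pi_1(\PP^2\setminus C)\cong H_1(\PP^2\setminus C,\Z)\cong\Z/d$, and the proposition follows at once.
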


 \begin{proof}
Since $C$ is a nodal projective curve, the group $\pi_1(\PP^2\setminus C)$ 
is abelian (see~\cite{Del}), whence it is isomorphic to $H_1(\PP^2\setminus 
C,\Z)$; the latter is obviously cyclic of order~$d$.
 \end{proof}

Recall the main result of local Picard--Lefschetz theory for Lefschetz 
pencils with one-dimensional fiber (\cite[Expos\'e XIV, 3.2.5]{SGA7.2} or 
\cite[\S\S\,5 and~6]{Lamotke}).

 \begin{proposition}\label{PL}
Suppose that \X is a $2$-dimensional complex manifold and $p\colon 
\X\to\Delta$ is a proper surjective holomorphic mapping with the following 
properties.

\textup{(i)} Over $\Delta^*$, the mapping $p$ has no critical points.

\textup{(ii)} In $p^{-1}(0)$, the mapping $p$ has only one critical 
point~$w$ and the Hessian of $p$ at $w$ is non-degenerate.

\textup{(iii)} All fibers of $p$ are connected.

Fix a point $z_0\in \Delta^*$ and put $C=p^{-1}(z_0)$ \textup(in view 
of~\textup{(i)}, $C$ is a compact Riemann surface\textup). Then

\textup{(a)} The curve $C_0=p^{-1}(0)$ is smooth everywhere except for a 
node at~$w$.

\textup{(b)} The curve $C$ contains an embedded circle~$S$ such that $C_0$ 
is homeomorphic to the quotient space $C/S$.

\textup{(c)} The monodromy operator on $H^1(C,\Z)$ corresponding to the 
generator of $\pi_1(\Delta^*)$ is defined by the formula
 \begin{equation}\label{eq:PL}
x\mapsto x-(x,c)c,
 \end{equation}
where $c\in H^1(C,\Z)$ is the Poincar\'e dual of the fundamental class of 
$S$.
 \end{proposition}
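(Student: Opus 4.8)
The plan is to run the classical local Lefschetz argument. First I would bring $p$ into normal form near the critical point: since the Hessian of $p$ at $w$ is non-degenerate, the holomorphic Morse lemma provides holomorphic coordinates $(u,v)$ centred at $w$ on a small polydisc $B\ni w$, and a coordinate $z$ on $\Delta$ centred at $0$, with $p=u^2+v^2=(u+iv)(u-iv)$ in these coordinates. Hypothesis~(i) says that $p$ is a submersion on $\X\setminus\{w\}$, so $C_0$ is smooth away from $w$, while $C_0\cap B=\{(u+iv)(u-iv)=0\}$ is a node; this is~(a). In the same model, for a small real $z_0>0$ the real locus $S=\{(u,v)\in\R^2\colon u^2+v^2=z_0\}$ is an embedded circle in $C\cap B$ that contracts to $w$ as $z_0\to0$: this is the \emph{vanishing cycle}.

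For~(b) I would build a continuous surjection $C\to C_0$ that is a homeomorphism over $C_0\setminus\{w\}$ and crushes $S$ to~$w$. Over a slightly smaller polydisc $B'$ one uses the explicit radial collapse of the annulus $\{u^2+v^2=z_0\}\cap B$ onto the node $\{u^2+v^2=0\}\cap B$; over $\X\setminus B'$ one notes that $p$ has no critical points there, so the Ehresmann fibration theorem, applied to the proper map obtained by restricting $p$ to $p^{-1}(\bar\Delta_{z_0})\setminus B'$ and using a lift of the radial vector field that is tangent to $\partial B'$, exhibits this region as a trivial bundle over $\bar\Delta_{z_0}$, hence a canonical homeomorphism $C\setminus B'\xrightarrow{\sim}C_0\setminus B'$. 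Gluing the two descriptions along $\partial B'$ produces the map, so $C_0\cong C/S$.

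The heart of the matter is~(c). Here I would show that the geometric monodromy diffeomorphism $h\colon C\to C$ attached to the generator of $\pi_1(\Delta^*,z_0)$ can be chosen supported in a collar of $S$ inside $C\cap B$, where it coincides with the monodromy of the Milnor fibration of the ordinary double point $u^2+v^2$; outside $B'$ one makes $h$ the identity using the product structure from the previous paragraph. That local fibration has an annulus as fibre and its monodromy is (isotopic to) the Dehn twist $T_S$ along the core circle $S$. A Dehn twist induces on $H_1(C,\Z)$ a transvection $\gamma\mapsto\gamma+\eps\,(\gamma,[S])\,[S]$ for a sign $\eps\in\{\pm1\}$; the complex orientation pins down $\eps=-1$, which I would check by transporting the relative $1$-cycle in $B$ meeting $S$ once around $z_0\mapsto e^{2\pi it}z_0$. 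Passing to cohomology via Poincar\'e duality $H_1(C,\Z)\xrightarrow{\sim}H^1(C,\Z)$, which is monodromy-equivariant and matches the intersection pairing with the cup-product pairing up to sign, and using that a transvection is orthogonal and self-adjoint for this pairing, converts the formula into $x\mapsto x-(x,c)c$ with $c$ the Poincar\'e dual of the fundamental class of $S$.

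The step I expect to be the main obstacle is the localization claim: proving rigorously that the geometric monodromy can be isotoped into a collar of $S$ and is there exactly the Milnor monodromy of the node. This combines the Ehresmann theorem for $p$ with a neighbourhood of $w$ removed (the delicate point being the choice of a lifting vector field tangent to $\partial B'$) with the explicit determination of the local Milnor fibration of $u^2+v^2$ (annular fibre, Dehn-twist monodromy). Fixing the sign $\eps=-1$ in~(c) is the other genuinely orientation-sensitive point; everything else is formal. All of this is carried out in detail in \cite[\S\S\,5 and~6]{Lamotke} and \cite[Expos\'e XIV]{SGA7.2}, which is why the statement is quoted here rather than reproved.
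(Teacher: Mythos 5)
Your outline is the standard local Picard--Lefschetz argument (holomorphic Morse lemma at $w$, vanishing cycle, geometric monodromy localized to a Dehn twist along $S$ in the Milnor fibration of $u^2+v^2$, then Poincar\'e duality), which is precisely the content of the sources \cite{SGA7.2} and \cite{Lamotke} that the paper cites; the paper itself gives no proof of Proposition~\ref{PL}, quoting it as a known result. So your proposal is correct in outline and takes essentially the same route, with the one genuinely technical step you single out (isotoping the monodromy into a collar of $S$ and identifying it with the local Milnor monodromy) deferred, just as in the paper, to those references.
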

(The circle $S$, as well as its Poincar\'e dual cohomology class, is called 
\emph{vanishing cycle}.)

Put $C_0=f^{-1}(\ell)$, where $\ell\subset (\PP^2)^*$ is a general line, 
and denote the genus of $C_0$ by~$g$.

 \begin{proposition}\label{tangent:generalities}
Suppose that $L\subset (\PP^2)^*$ is a line such that $L^\perp \in C$. Then 
the curve $f^{-1}(L)$ is connected, and $L$ can be tangent to $C^*$ only at 
one or two points.
 \end{proposition}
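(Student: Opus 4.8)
I would prove the two assertions separately. For connectedness, note that since $L$ is a hyperplane in the projective plane $(\PP^2)^*$, the line bundle $\mathcal{O}_X(f^{-1}(L))=f^*\mathcal{O}_{(\PP^2)^*}(1)$ is the pull-back of an ample line bundle under the finite morphism $f$, hence is ample. We may assume that $X$ is connected: if it were not, then, since the ramification of $f$ is simple and $C^*$ is irreducible, all connected components of $X$ but one would map isomorphically onto $(\PP^2)^*$, and those could be discarded. On a connected smooth projective surface an effective ample divisor is connected: in the exact sequence $0\to\mathcal{O}_X(-f^{-1}(L))\to\mathcal{O}_X\to\mathcal{O}_{f^{-1}(L)}\to 0$ the group $H^1(X,\mathcal{O}_X(-f^{-1}(L)))$ vanishes, being Serre-dual to $H^1(X,K_X+f^{-1}(L))=0$ (Kodaira vanishing, as $f^{-1}(L)$ is ample), so $H^0(X,\mathcal{O}_{f^{-1}(L)})$ is one-dimensional and $f^{-1}(L)$ is connected. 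This holds for every line $L$, in particular for those with $L^\perp\in C$.

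For the bound on the number of points of tangency I would argue by projective duality. Let $L\subset(\PP^2)^*$ be a line with $q\mathrel{:=}L^\perp\in C$, so $L=q^\perp$. By Propositions~\ref{smooth_f^-1}, \ref{singular_f^-1} and~\ref{not.smooth.over.bad} (the last with $D=C^*$), and since $C^*$ has no inflexion points, the points $t\in C^*$ at which $L$ is tangent to $C^*$ are precisely those over which $f^{-1}(L)$ acquires a singularity: for a smooth point $t$ of $C^*$ this means $L=T_tC^*$; for a node $t$, that $L$ is tangent to one of the two branches of $C^*$ at $t$; for a cusp $t$, that $L$ is the cuspidal tangent of $C^*$ at $t$. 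The key observation is that, by biduality $(C^*)^*=C$, the tangent line at $t=[m]$ to the branch of $C^*$ corresponding to a point $x\in C$ at which the line $m\subset\PP^2$ is tangent to $C$ equals $x^\perp$ — and, in the cuspidal case, the cuspidal tangent of $C^*$ at $t=[m]$ equals $x^\perp$, with $x$ the inflexion point of $C$ having tangent $m$. Hence in each of the cases above, $L=q^\perp$ being tangent to $C^*$ at $t=[m]$ forces $q^\perp=x^\perp$, i.e.\ $q=x$: a line $L$ with $L^\perp=q\in C$ can be tangent to $C^*$ only at points $[m]$ such that $m$ is the tangent line to a branch of $C$ at $q$ itself.

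It then remains to count branches. Since $C$ is nodal, the point $q\in C$ lies on a single branch, carrying a single tangent line, if $q$ is a smooth point, and on exactly two branches, carrying two distinct limiting tangent lines, if $q$ is a node. Therefore $L=q^\perp$ is tangent to $C^*$ at exactly one point, namely $[T_qC]$, when $q$ is a smooth point of $C$, and at exactly two distinct points when $q$ is a node of $C$; in either case the number of points of tangency is at most two, as claimed.

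The step I expect to be the main obstacle is the uniform description — over smooth points, nodes, and cusps of $C^*$ alike — of which lines through a point $t\in C^*$ should count as ``tangent to $C^*$ at $t$''. Every line through a cusp of $C^*$ meets $C^*$ there with multiplicity $\ge2$, yet $f^{-1}(L)$ stays smooth over that point unless $L$ is the cuspidal tangent; so the relevant notion of tangency is the one dictated by Proposition~\ref{singular_f^-1} (equivalently, by the local normal forms of Proposition~\ref{not.smooth.over.bad}), and one must be careful to run the biduality computation against that notion rather than against bare intersection multiplicity.
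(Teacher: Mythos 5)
Your proof is correct, and it is half the paper's argument and half a different one. For the bound on the number of tangency points you use exactly the paper's reasoning, just written out in full: by biduality, a point of tangency of $L=q^\perp$ with $C^*$ (tangency to a branch at a node, limiting tangent at a cusp) corresponds to a branch of $C=(C^*)^*$ through $q$, and a nodal curve has at most two branches at any point; the paper compresses this into one sentence, while you check the smooth/node/cusp cases of $C^*$ explicitly, which is a fair elaboration of the same duality argument. Where you genuinely diverge is connectedness: the paper simply invokes \cite[Proposition~1]{FH} (a Fulton--Hansen-type connectedness statement, valid for arbitrary lines $L$ and requiring only irreducibility of $X$), whereas you give a self-contained proof via ampleness of $f^*\Ooo_{(\PP^2)^*}(1)$, the structure sequence of the divisor $f^*L$, and Kodaira vanishing plus Serre duality. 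This is valid and buys independence from the external connectedness theorem, at the price of using smoothness of $X$ (harmless, since smoothness is a standing hypothesis in this section). One caveat: your reduction to connected $X$ by ``discarding'' components that map isomorphically onto $(\PP^2)^*$ is not really a reduction --- if such extra components existed, $f^{-1}(L)$ would in fact be disconnected and the proposition false as stated for that $X$; the honest formulation is that $X$ is assumed connected (the usual reading of ``smooth projective surface,'' and an assumption the paper also needs, since the appeal to \cite{FH} likewise requires irreducibility). With that understanding your argument goes through; your justification that at most one component can be ramified (the local monodromy around a general point of the irreducible curve $C^*$ is a single transposition) is sound, it just should be used to say the hypotheses force connectedness rather than to modify $X$.
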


 \begin{proof}
The connectedness of $f^{-1}(L)$ (for arbitrary $L$) follows 
from~\cite[Proposition~1]{FH}. A line $L$ cannot be tangent to $C^*$ at 
more than two points since all the singularities of the curve $C=(C^*)^*$ 
are nodes.
 \end{proof}

 \begin{proposition}\label{f(-1)(tangent)}
If a line $L\subset (\PP^2)^*$ is tangent to $C^*$ at one point, then the 
curve $f^{-1}(L)$ is homeomorphic to the quotient space $C_0/S$, where 
$S\subset C_0$ is homeomorphic to the circle and $S$ is homologous to zero 
in $C_0$.
 \end{proposition}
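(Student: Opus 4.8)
The plan is to realize $f^{-1}(L)$ as the special fibre of a Picard--Lefschetz degeneration, to apply Proposition~\ref{PL}, and then to use the finiteness of the monodromy established in Proposition~\ref{finite_monodromy} to force the resulting vanishing cycle to be null-homologous.

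First I would set up the degeneration. Viewing $\PP^2=((\PP^2)^*)^*$ as the space of lines in $(\PP^2)^*$, with a point $t\in\PP^2$ corresponding to the line $t^\perp$, I would choose a generic line $\Lambda\subset\PP^2$ through $L^\perp$ and a disk $\Delta\subset\Lambda$ around $L^\perp$ small enough that $\Delta\cap C=\{L^\perp\}$ (possible since $\deg C>2$ forces $\Lambda\not\subset C$). With $I$ the incidence variety~\eqref{inc.var} I put $\X=\pr_1^{-1}(\Delta)$ and $p=\left.\pr_1\right|_{\X}\colon\X\to\Delta$, so that $p^{-1}(t)=f^{-1}(t^\perp)$ for $t\in\Delta$ and $p^{-1}(L^\perp)=f^{-1}(L)$. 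Now $I$ is a smooth threefold ($\pr_2$ exhibits it as a $\PP^1$-bundle over $X$), $\pr_1$ is a submersion over $\PP^2\setminus C$, and the scheme fibre of $\pr_1$ over $L^\perp$ is the divisor $f^*L$ on $X$, which is reduced because $L$ is not a component of the branch locus $C^*$. It follows that $\pr_1$ is a submersion at every smooth point of the curve $f^{-1}(L)$, while at its unique node $w$ (Proposition~\ref{singular_f^-1}) the differential $d\pr_1$ has rank $1$; hence for generic $\Lambda$ the map $\pr_1$ is transverse to $\Lambda$ at $w$, and $\X$ is a smooth surface.

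Since $\X$ is smooth and every scheme fibre $p^{-1}(t)=f^*(t^\perp)$ is reduced, the critical points of $p$ are precisely the singular points of its fibres; so by Propositions~\ref{smooth_f^-1} and~\ref{singular_f^-1} the only one in the entire family is the node $w$ of $f^{-1}(L)$, which has non-degenerate Hessian. This verifies hypotheses (i) and (ii) of Proposition~\ref{PL}, and (iii) (connectedness of all fibres) holds by Proposition~\ref{tangent:generalities} for the fibre over $L^\perp$ and by~\cite[Proposition~1]{FH} for the others. Picking $z_0\in\Delta\setminus\{L^\perp\}$ and identifying $C_0$ with $p^{-1}(z_0)=f^{-1}(z_0^\perp)$ (the fibres of $\pr_1$ over $\PP^2\setminus C$ being mutually homeomorphic), Proposition~\ref{PL} yields an embedded circle $S\subset C_0$ such that $f^{-1}(L)$ is homeomorphic to $C_0/S$ and such that the generator of $\pi_1(\Delta\setminus\{L^\perp\})$ acts on $H^1(C_0,\Z)$ by the transvection $T\colon x\mapsto x-(x,c)c$, where $c\in H^1(C_0,\Z)$ is the Poincar\'e dual of $[S]$. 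To finish I must check that $c=0$. Under $\Delta\setminus\{L^\perp\}\subset\PP^2\setminus C$ the generator of $\pi_1(\Delta\setminus\{L^\perp\})$ maps to an element of $\pi_1(\PP^2\setminus C)$ whose monodromy action on $H^1(C_0,\Z)$ is exactly $T$; by Proposition~\ref{finite_monodromy} this action has finite order, say $T^k=\mathrm{id}$. The cup product on $H^1$ of a compact Riemann surface is alternating, so $(c,c)=0$ and therefore $T^k(x)=x-k(x,c)c$ for every $x$, whence $k(x,c)c=0$ for all $x\in H^1(C_0,\Z)$. If $c\ne0$, then --- the cup product being non-degenerate over $\Q$ --- some $x$ satisfies $(x,c)\ne0$, and $k(x,c)c=0$ in the torsion-free group $H^1(C_0,\Z)$ forces $c=0$, a contradiction. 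Hence $c=0$, i.e.\ $S$ is homologous to zero in $C_0$.

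I expect the main obstacle to be the verification of hypothesis (ii) of Proposition~\ref{PL}, namely that the degeneration of $f^{-1}(t^\perp)$ as $t\to L^\perp$ is genuinely Morse: this calls for combining the ``single node'' statement of Proposition~\ref{singular_f^-1} with the reducedness of $f^*L$ and the transversality furnished by the generic choice of $\Lambda$, so as to exclude any further or more degenerate critical points of $p$. By contrast, once the Picard--Lefschetz picture is in place the concluding argument --- that a finite-order transvection on a torsion-free alternating lattice is trivial --- is short, and it is precisely there that Proposition~\ref{finite_monodromy} is used.
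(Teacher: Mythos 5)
Your proof is correct and follows essentially the same route as the paper: restrict the incidence family over a small disk around $L^\perp$ in a line of $\PP^2$ through $L^\perp$, apply Proposition~\ref{PL}, and kill the vanishing cycle by factoring the local monodromy through $\pi_1(\PP^2\setminus C)$ and invoking Proposition~\ref{finite_monodromy}. The only (harmless) difference is in setting up the pencil: the paper takes the line $t^\perp$ with $t\in L\setminus C^*$, so the total space is the blow-up of $X$ at $f^{-1}(t)$ and is smooth with the Morse condition automatic, whereas you take a generic line $\Lambda$ through $L^\perp$ and verify smoothness and non-degeneracy of the critical point by a transversality and reducedness argument.
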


 \begin{proof}
Take a point $t \in L \setminus C^*$, so that $L^\perp \in t^\perp \subset 
\PP^2$, and $t^\perp$ is transversal to $C$.  Then consider the incidence 
variety
\begin{equation}\label{eq:Lefschetz_pencil}
\tilde X=\{(x, a)\in X\times t^\perp\colon f(x)\in a^\perp\}
\end{equation}
(the surface $\tilde X$ is just the blow-up of $X$ at $f^{-1}(t)$). The 
existence of $S$ follows now from Proposition~\ref{PL} applied to the 
natural projection $\tilde X \to t^\perp$ restricted on the preimage of a 
small disk $\Delta \subset t^\perp$ centered at $L^\perp$. 

We are left only to check that the vanishing cycle is zero-homologous. 
Choosing $z_0\in \Delta\setminus \{L^\perp\}$ and putting $C_0=f^{-1}(L)$, 
observe that $\Delta \setminus L^\perp \subset \PP^2 \setminus C$, so the 
monodromy representation
 \begin{equation}\label{local_mono}
\pi_1(\Delta\setminus \{L^\perp\})\to \Aut(H^1(C_0,\Z))
 \end{equation}
factors through the monodromy representation 
 \begin{equation}\label{global_mono}
\pi_1(\PP^2\setminus C)\to \Aut(H^1(C_0,\Z)).
 \end{equation}
The image of the homomorphism~\eqref{global_mono} is finite by 
Proposition~\ref{finite_monodromy}, whence the image of the generator of 
$\pi_1(\Delta\setminus\{L^\perp\})$ under the 
homomorphism~\eqref{local_mono} has finite order~$k$. The $k$th power of 
the monodromy operator \eqref{eq:PL} is 
 \[
x\mapsto  x - k\cdot(x,c)c,
 \] 
which is identity if and only if $(c,x)=0$ for all $x\in
H^1(C_0,\Z)$. So, $c=0$ and the curve $S$ is homologous to zero.
 \end{proof}

 \begin{proposition}\label{unique_section}
Suppose that $\deg C>2$ and  $L\subset(\PP^2)^*$ is a line tangent to $C^*$ 
at exactly one point~$q$. Then only the following two cases are possible:

\textup{(a)} $f^{-1}(L)=Y_1\cup Y_2$, where $Y_1$ and $Y_2$ are smooth 
projective curves intersecting transversally at a point lying 
over~$q$, where the restriction $\left.f\right|_{Y_1}\colon Y_1\to L$ is an
isomorphism, the restriction $\left.f\right|_{Y_2}\colon Y_2\to L$ has
degree~$>1$, and $(Y_1,Y_1)=0$;

\textup{(b)} $C$ is a smooth cubic curve and $f$ is equivalent to a
projection of the Veronese surface $v_2(\PP^2)\subset\PP^4$ \textup(in
particular, $\deg f=4$\textup). In this case both $Y_1$ and $Y_2$ is a
smooth rational curve, and each of the restrictios
$\left.f\right|_{Y_1}\colon Y_j\to L$ has degree~$2$.
 \end{proposition}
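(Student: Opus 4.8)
The plan is to read the topological type of $f^{-1}(L)$ off the preceding propositions, translate it into intersection numbers on $X$, cut the numerical possibilities down with the Hodge index theorem, and dispose of the one remaining case by surface classification. \emph{Shape of $f^{-1}(L)$ and intersection numbers.} By Proposition~\ref{singular_f^-1} the curve $f^{-1}(L)$ has exactly one singular point, a node, over $q$; by Proposition~\ref{tangent:generalities} it is connected; and by Proposition~\ref{f(-1)(tangent)} it is homeomorphic to $C_0/S$ with $S\subset C_0$ a circle homologous to zero. Since a null-homologous simple closed curve on the closed oriented surface $C_0$ separates it into two pieces, $C_0/S$, and hence $f^{-1}(L)$, has exactly two irreducible components $Y_1,Y_2$, both smooth (they are obtained from the two pieces of $C_0\setminus S$ by collapsing their single boundary circle), meeting transversally at the node over $q$. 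Put $n=\deg f$, $n_i=\deg(f|_{Y_i})$, so $n_1+n_2=n$, and assume $n_1\le n_2$. As $f$ is finite, $f^{*}\Ooo_{(\PP^2)^*}(1)$ is ample; simple ramification makes $f^{-1}(L)=Y_1+Y_2$ reduced, and the projection formula gives $(Y_i,Y_i)=n_i-1\ge 0$ and $(Y_1,Y_2)=1$. Thus $(Y_i,Y_i)=0$ iff $n_i=1$ iff $f|_{Y_i}$ is an isomorphism onto $L$; and $n=2$ is impossible, since a smooth surface cannot be a double cover of $(\PP^2)^*$ branched along the singular curve $C^{*}$, so $n_1=1$ forces $n_2>1$.

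\emph{Hodge index.} Applying the Hodge index theorem to $Y_1,Y_2\in\operatorname{NS}(X)$ and using that $Y_1+Y_2=f^{*}\Ooo(1)$ has positive self-intersection, the form on $\langle Y_1,Y_2\rangle$ is not positive definite, so $(n_1-1)(n_2-1)\le 1$; the case of proportional classes is handled directly and also gives $n_1=n_2=2$. With $1\le n_1\le n_2$ only two possibilities survive: $n_1=1$, which is case~(a) (the isomorphism being $f|_{Y_1}$, with $(Y_1,Y_1)=0$ and $n_2=n-1>1$), or $n_1=n_2=2$ and $n=4$.

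\emph{The case $n_1=n_2=2$.} Here one shows $f$ is a projection of the Veronese surface. The monodromy of $H^1(C_0,\Z)$ under $\pi_1(\PP^2\setminus C)$ is trivial: by Proposition~\ref{finite_monodromy} this group is generated by a loop around $C$ at a general point, and by Propositions~\ref{PL} and~\ref{f(-1)(tangent)} that loop acts by $x\mapsto x-(x,c)c$ with $c=0$. Hence, for a general $p\in C$, the degeneration of the family $\pr_1\colon I\to\PP^2$ to the fibre $f^{-1}(p^{\perp})=Y_1^{p}\cup Y_2^{p}$ has vanishing local monodromy, so its limit mixed Hodge structure is pure of weight one and $H^1(C_0)\cong H^1(Y_1^{p})\oplus H^1(Y_2^{p})$ as polarized Hodge structures; therefore $\operatorname{Jac}(C_0)$ is a product of two principally polarized abelian varieties with the product polarization. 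Since $C_0=f^{-1}(\ell)$ is smooth and irreducible, and the Jacobian of such a curve is indecomposable, one factor vanishes, i.e. one component, say $Y_1^{p}$, is rational. As $p$ varies over the connected locus of general points of $C$ and the monodromy is trivial, the class $A:=[Y_1^{p}]$ is independent of $p$, with $A^2=1$, $(A,f^{*}\Ooo(1))=2$, and it is carried by rational curves that genuinely move (otherwise a fixed common component would lie in $\bigcap_{L^{\perp}\in C}f^{-1}(L)=f^{-1}\!\bigl(\bigcap L\bigr)=\varnothing$). A smooth surface carrying a moving rational curve of positive self-intersection is rational, so Riemann--Roch yields $h^0(A)\ge 3$ and $|A|$ defines a birational morphism $\phi\colon X\to\PP^2$; the relations $A^2=1$ and $(A,f^{*}\Ooo(1))=2=\tfrac12(f^{*}\Ooo(1))^2$ then force $f^{*}\Ooo(1)=2\,\phi^{*}\Ooo_{\PP^2}(1)$, so $h^0(X,f^{*}\Ooo(1))=6$ and $f$ factors as a linear projection of $v_2\circ\phi$. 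But $f$ is finite and so contracts no curve, hence $\phi$ contracts none either, hence $\phi$ is an isomorphism; thus $X\cong\PP^2$, $f$ is a degree-$4$ projection of the Veronese surface, $Y_1$ and $Y_2$ are lines each mapped $2:1$ onto $L$, the branch curve $C^{*}$ is the dual of a smooth cubic, and $C$ is that cubic --- case~(b).

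\emph{Main obstacle.} The work is concentrated in the last paragraph. One must check carefully that the degeneration over a general point of $C$ is Hodge-theoretically tame (trivial local monodromy and purity of the limit mixed Hodge structure, i.e. the vanishing cycle produces no jump of the weight filtration), and, above all, that finiteness of $f$ together with smoothness of $X$ really do force $\phi_{|A|}$ to be an isomorphism --- ruling out all blow-ups of $\PP^2$ and every other smooth rational surface. Everything up to the Hodge index dichotomy is formal given Propositions~\ref{singular_f^-1}, \ref{tangent:generalities}, and~\ref{f(-1)(tangent)}.
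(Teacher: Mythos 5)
Up to the numerical dichotomy your argument is essentially the paper's: the two--component structure of $f^{-1}(L)$ comes from Propositions~\ref{singular_f^-1}, \ref{tangent:generalities} and~\ref{f(-1)(tangent)}, and the Hodge index theorem cuts down the possibilities; your normalization $(Y_i,Y_i)=n_i-1$ (via the projection formula and $(Y_1,Y_2)=1$) is a mild streamlining of the paper's two cases $\dim V=1,2$. Where you genuinely diverge is the residual case $n_1=n_2=2$: the paper proves Lemma~\ref{2g=g} (the invariant--cycle isomorphism $H^1(X,\C)\cong H^1(H,\C)$, then Kodaira vanishing and Serre duality giving $\lmod K_X+H\rmod=\varnothing$, hence $\lmod K_X+Y_1\rmod=\varnothing$ and $g(Y_1)=g(H)$), deduces $g=0$ from $2g=g$, and quotes the characterization of $(\PP^2,\text{line})$ by an ample smooth rational curve of self-intersection $1$; you instead use a limit mixed Hodge structure and indecomposability of Jacobians to produce a rational component, uniruledness to make $X$ rational, and Riemann--Roch plus finiteness of $f$ to force $X\cong\PP^2$. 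This route can be completed, and the ``main obstacle'' you flag is in fact the easy part: any curve contracted by $\phi_{|A|}$ would be orthogonal to the ample class $f^*\Ooo_{(\PP^2)^*}(1)\equiv 2A$, which is impossible. The real gaps are elsewhere.

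First, the isomorphism $H^1(C_0)\cong H^1(Y_1^p)\oplus H^1(Y_2^p)$ of \emph{polarized} Hodge structures for the fixed smooth fiber $C_0$ does not follow from purity of the limit MHS of the local degeneration: the local statement only describes the limit, not the nearby fiber you actually use. You need the global fact that the variation over $\PP^2\setminus C$ is constant, which does follow from the trivial monodromy via the theorem of the fixed part (global invariant cycle theorem) --- this is precisely the mechanism of the paper's Lemma~\ref{2g=g} --- but it must be invoked; as written the inference is a jump. Second, and more seriously, ``the monodromy is trivial'' does not justify that $A=[Y_1^p]$ is independent of $p$, nor that the other tangent lines split $2{+}2$ rather than $1{+}(n{-}1)$: that triviality concerns $H^1$ of the fibers, not the classes of fiber components in $H^2(X)$, and moreover the given $L$ may be tangent at a cusp or a node of $C^*$, hence not lie in the family of ``general $p\in C$'' at all, so one cannot simply vary $p$ from it. The fix is numerical rather than monodromic: since $Y_1-Y_2$ is an isotropic class orthogonal to the positive class $Y_1+Y_2$, Hodge index gives $Y_1\equiv Y_2\equiv A$ and $f^*\Ooo_{(\PP^2)^*}(1)\equiv 2A$; hence every component $Z$ of every line preimage has $\deg(f|_Z)=(Z,2A)$ even, so all one-point tangent lines split $2{+}2$ with both components numerically equivalent to $A$, and your moving family of rational curves exists. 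A last small point: ruling out $\deg f=2$ by ``a smooth surface cannot be a double cover branched along the singular $C^*$'' tacitly assumes $C^*$ is singular; this is true (a nodal curve of degree $>2$ has inflexions by Pl\"ucker, so $C^*$ has cusps), but should be said --- the paper instead argues that $C^*$ would then be a smooth curve without inflexions, hence a conic, contradicting $\deg C>2$.
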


 \begin{proof}
Propositions~\ref{f(-1)(tangent)} and~\ref{tangent:generalities} show
that $f^{-1}(L)=Y$ is a connected curve homeomorphic to the quotient
space $Y'/S$, where $Y'$ is a sphere with handles, and $S\subset Y'$
is a zero-homologous circle. Then $Y'/S$ is homeomorphic to the wedge
sum of two spheres with handles, so the curve $Y$ has exactly two
components.

Since the divisor $Y=Y_1+Y_2$ is the inverse image of the line~$L\subset 
(\PP^2)^*$ and $f(Y_1)=f(Y_2)=L$ (which follows from the finiteness of the 
morphism~$f$), one has, for $j=1$ or~$2$,
 \begin{equation}\label{eq:pos}
(Y_j,Y_1+Y_2)=\deg(\left.f\right|_{Y_j})>0.
 \end{equation}
The curves $Y_1$ and $Y_2$ intersect transversally at one point, whence 
$(Y_1,Y_2)=1$ and therefore
 \begin{equation}\label{both_non-neg}
(Y_1,Y_1)\ge0,\quad (Y_2,Y_2)\ge0.
 \end{equation}

Denote by $V\subset H^2(X,\R)$ the subspace generated by fundamental 
classes of $Y_1$ and $Y_2$. Consider two cases.

\emph{Case 1}: $\dim V=1$. In this case the 
classes of $Y_1$ and $Y_2$ are proportional: $Y_2 = rY_1$.

Observe that the number $r$ must be positive because $(Y_1,f^{-1}L)>0$, 
$(Y_2,f^{-1}L)>0$ for a general line $L\subset(\PP^2)^*$. Since
 \[
1=(Y_1,Y_2)=r(Y_1,Y_1)=r^{-1}(Y_2,Y_2),
 \]
and both $(Y_1,Y_1)$ and $(Y_2,Y_2)$ are integers, one has $r=1$, so
that $Y_1 = Y_2$ in cohomology, and $(Y_1,Y_1)=(Y_2,Y_2)=1$. Since
homology classes of $C$ and $C'$ are equal and $C+C'=f^{-1}(L)$ is an
ample divisor, the divisors $C$ and $C'$ are also ample. Now $\deg
f=(Y_1+Y_2,Y_1+Y_2)=4$. Since fundamental classes of the curves $Y_1$
and $Y_2$ coincide, their genera are the same; denote this number by
$g$.

 \begin{lemma}\label{2g=g}
The curves $Y_1$ and $f^{-1}(L)$ have the same genus.    
 \end{lemma}

 \begin{proof}
Observe that the surface $\tilde X$ defined by the 
equation~\eqref{eq:Lefschetz_pencil} with the natural projection $q\colon 
\tilde X\to t^\perp$ is a Lefschetz pencil. Now a standard argument (see, 
for example, \cite[Expos\'e XVIII, Theorems 5.6.8
  and~5.6.2]{SGA7.2}) shows that image of the natural injection
$H^1(X,\C)\hookrightarrow H^1(f^{-1}(L),\C)$, where the line
$L\subset(\PP^2)^*$ is transversal to $C^*$ and passes through~$t$,
coincides with the invariant subspace
\[
H^1(f^{-1}(L))^{\pi_1(L\setminus C^*)}.
\]
Put $f^{-1}(L)=H$; since the monodromy group is generated by the 
operators~\eqref{local_mono} and all the vanishing cycles~$c$ are zero in 
view of Proposition~\ref{f(-1)(tangent)}, we conclude that the restriction 
$H^1(X,\C)\hookrightarrow H^1(H,\C)$ is an isomorphism, whence the 
homomorphism $H^1(X,\Ooo_X)\to H^1(H,\Ooo_H)$ from the exact sequence
\[
0\to\Ooo_X(-H)\to\Ooo_X\to\Ooo_H\to 0
\]
is also an isomorphism; the Kodairra vanishing theorem and Serre's duality 
show then that this is equivalent to the equation $\dim \lmod K_X\rmod = 
\dim \lmod K_X+H\rmod$; since $\dim \lmod H\rmod > 0$, this is equivalent 
to the relation $\lmod K_X+H\rmod=\varnothing$ (cf.~\cite[Proposition 
6.1]{Lvovski}).

Since $H=Y_1+Y_2$ and $Y_2$ is an effective divisor, it follows now that 
the linear system $\lmod K_X+Y_1\rmod$ is also empty; since the divisor 
$Y_1$ is ample, the above argument in the reverse order shows that 
$H^1(X,\Ooo_X) \cong H^1(Y_1,\Ooo_{Y_1})$. Thus, genera of the curves 
$f^{-1}(L)=H$ and $Y_1$ are both equal to $\dim H^1(X,\Ooo_X)$. 
 \end{proof}

If a line $L'\subset(\PP^2)^*$ is transversal to $C^*$, then, according to 
Proposition~\ref{PL}b, $Y_1\cup Y_2$ is homeomorphic to $Y'=f^{-1}(L')$ 
with a circle contracted to a point. Hence, the genus of $Y'$ equals $2g$. 
Since $2g=g$ by Lemma~\ref{2g=g}, we infer that $g=0$. Thus, $Y_1$ and 
$Y_2$ are smooth rational ample curves with self-intersection indices~$1$ 
on the smooth projective surface~$X$. It is well known (see, for example, 
\cite[Proposition 2.3]{Lvovski2}) that if a smooth rational curve $Y$ on a 
smooth projective surface $X$ is ample and $(Y,Y)=1$, then there exists an 
isomorphism from $X$ to $\PP^2$ mapping $Y$ to a line. Thus, $X\cong \PP^2$ 
and $\Ooo_X(f^{-1}(L))\cong \Ooo_{\PP^2}(2)$, so the mapping $f\colon 
X\to\PP^2$ is isomorphic to a projection $\pr_\Lambda\colon 
v_2(\PP^2)\to\PP^2$, where $v_2(\PP^2)\subset \PP^5$ is the quadratic 
Veronese surface and $\Lambda\subset\PP^5$ is a $2$-plane disjoint from 
$v_2(\PP^2)$. Since the curve $C^*$ is the branch locus of the projection 
$\pr_\Lambda$, its dual curve $C\subset\PP^2$ coincides with the 
intersection $(v_2(\PP^2))^*\cap\Lambda^\perp$ (cf. the proof of 
Proposition~\ref{prop:example} below), which is clearly a smooth plane 
cubic since $(v_2(\PP^2))^*$ is the variety of degenerate symmetric 
$3\times3$-matrices.

\emph{Case 2}: $\dim V = 2$. By the Hodge index theorem, 
 \[
\begin{vmatrix}
(Y_1,Y_1)&(Y_1,Y_2)\\
(Y_1,Y_2)&(Y_2,Y_2)
\end{vmatrix}
<0,
 \]
whence $(Y_1,Y_1)(Y_2,Y_2)<1$. So at least one of the factors must be
zero; assume without loss of generality that $(Y_1,Y_1)=0$, then
 \[ 
\deg(\left.f\right|_{Y_1})=(Y_1,Y_1+Y_2)=1
 \]
Thus, the restriction of $f$ to $Y_1$ is an isomorphism from $Y_1$ to the 
line $L$. Let us prove that degree of the restriction 
$\left.f\right|_{Y_2}\colon Y_2\to f(Y_2)$ is greater than one. Indeed, $L$ 
is tangent to $C^*$ at only one point, so $Y$ has only two components. 
Thus, if $Y_2$ maps with degree~$1$ onto~$L$, then
 \[ 
(Y_2,Y_2)=(Y_2,Y_1+Y_2)-(Y_2,Y_1)=1-1=0
 \]
and $\deg f=(Y_1+Y_1,Y_2+Y_2)=2(Y_1,Y_2)=2$, so all nodes and/or cusps of 
$C^*$ must be bad in the sense of Definition~\ref{def:bad.cusp}, which 
contradicts the smoothness of $X$ in view of 
Proposition~\ref{not.smooth.over.bad}. Thus, $C^*$ must be smooth. Since 
its dual curve $(C^*)^*=C$ has no cusps, the smooth curve~$C^*$ has no 
inflexion points. It is well known (and follows, for example, from the 
Pl\"ucker formulas~\cite[Section 9.1, Theorem 1]{BK}) that the only smooth 
plane curve without inflexions is the conic, whence $C$ is a conic, which 
contradicts the hypothesis.
 \end{proof}

 \begin{note}
The trick with Hodge index theorem is essentially contained in Van de Ven's 
paper~\cite{VandeVen} (see the proof of Theorem~I). A similar argument 
allows one to give a proof of the main result of the paper~\cite{Zak} that 
is valid in arbitrary characteristic (see~\cite{Lvovski2}).
 \end{note}

 \begin{proposition}\label{X_C=X}
Suppose that $C\subset\PP^2$ is a smooth or nodal curve of degree~$>2$ that 
is general enough in the sense of Definition~\ref{def:general_curve}. Let 
also $X$ be a smooth projective surface and $f\colon X\to(\PP^2)^*$ be a 
finite morphism with branch locus $C^*$ and such that the ramification is 
simple. If $\deg C=3$, assume in addition that $C$ has a node or $\deg f\ne 
4$. Then there exists an isomorphism $\Phi\colon X\to X_C$ such that 
$f_C\circ\Phi=f$, where $X_C$ and $f_C$ are defined by 
equations~\eqref{def_of_X_C} and~\eqref{def:f_C}, respectively.
 \end{proposition}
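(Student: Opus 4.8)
The idea is to construct the isomorphism $\Phi$ by relating $X$ and $X_C$ through a common ``linear system'' description. The surface $X_C$ carries the tautological line bundle coming from its realization as $\PP(\nu^*\T_{\PP^2}(-1))$, and the key observation is that $f_C^*\Ooo_{(\PP^2)^*}(1)$ is this tautological bundle; in particular, $f_C$ is given by a linear subsystem of a complete linear system on $X_C$. So the plan is: first, produce on $X$ a line bundle $\M$ playing the role of $f^*\Ooo_{(\PP^2)^*}(1)$ — which we have automatically, namely $\M = f^*\Ooo_{(\PP^2)^*}(1)$ — and analyze the restriction maps $H^0(X,\M)\to H^0(f^{-1}(L),\M|_{f^{-1}(L)})$ for lines $L$. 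Using Proposition~\ref{unique_section} (the non-exceptional case (a), which is exactly where the hypothesis excluding smooth cubics with $\deg f=4$ enters), over a tangent line $L$ the curve $f^{-1}(L)$ splits as $Y_1\cup Y_2$ with $Y_1$ a section ($(Y_1,Y_1)=0$, $f|_{Y_1}$ an isomorphism onto $L$). This section $Y_1$ is the crucial geometric object: as $L$ varies over the tangent lines to $C^*$ — equivalently, as $L^\perp$ ranges over $C$ — these sections sweep out a curve in $X$ mapping isomorphically, via $f$, to (a curve dual-incident to) $C$, and this is what will let us build the map $X\to X_C$.

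\textbf{Key steps.} (1) Show that for each $t \in (\PP^2)^*\setminus C^*$ the fiber $f^{-1}(t)$, together with the incidence data, gives a point of $\hatC$ — more precisely, reconstruct the universal family. Concretely: for $L$ tangent to $C^*$ at one point $q$, Proposition~\ref{unique_section}(a) gives the distinguished component $Y_1 = Y_1(L)$. Since $C^*$ parametrizes the tangent lines, and $C = (C^*)^*$, the point $q \in C^*$ corresponds to a point of $\hatC$ (a smooth point of $C$ or a branch at a node). Define a map $\hatC \times (\PP^2)^* \dashrightarrow$ (something on $X$) by sending $(x,t)$, when $\nu(x) \in t^\perp$, to the point of $Y_1(\overline{\text{tangent}})\cap f^{-1}(t)$; check this extends to a morphism $X_C \to X$. (2) Verify $f_C\circ(\text{this map}) = f$ up to the identification, i.e.\ the constructed map is a morphism of surfaces over $(\PP^2)^*$. (3) A finite morphism of smooth surfaces over $(\PP^2)^*$ of the same degree with the same branch locus, which is a bijection on a general fiber (both fibers being $d$ reduced points mapped compatibly), is an isomorphism — here one uses that the monodromy of $f$ and of $f_C$ on a general fiber must agree because the section $Y_1$ provides a ``marked point'' pinning down the permutation representation, forcing $\pi_1((\PP^2)^*\setminus C^*)\to S_d$ to be the standard one associated to the dual curve. (4) Conclude $\Phi$ is the inverse of the constructed map.

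\textbf{Alternative (cleaner) route.} Rather than constructing $\Phi$ by hand, it may be cleaner to argue via the uniqueness in Corollary~\ref{cor:factors=>exists_X}: it suffices to show that the fiber monodromy homomorphism $\pi_1(L_0\setminus C^*)\to \Perm(f^{-1}(t_0))$ of $f$ (restricted to a transversal line $L_0$) \emph{factors through} $\pi_1((\PP^2)^*\setminus C^*)$ and moreover gives the \emph{same} monodromy representation as $f_C$ does; then both $f$ and $f_C$ extend the same covering of $(\PP^2)^*\setminus C^*$, so by the uniqueness part of Grauert--Remmert (as in Corollary~\ref{cor:factors=>exists_X}) they are isomorphic over $(\PP^2)^*$, giving $\Phi$. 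The monodromy of $f_C$ around a tangent line to $C^*$ is a single transposition swapping the two sheets that ``come together'' at the point of tangency; the content of Proposition~\ref{unique_section}(a) is precisely that the same holds for $f$ — the component $Y_1$ being a section means exactly one sheet is fixed and the transposition acts on the rest — and one checks the two transpositions are ``the same'' by matching the distinguished section $Y_1$ with the obvious section of $f_C$. Since $\pi_1((\PP^2)^*\setminus C^*)$ is generated by these loops around branches of $C^*$ (abelianness from \cite{Del} is not even needed here, only generation), matching each generator gives the isomorphism of the two permutation representations.

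\textbf{Main obstacle.} The hard part is Step~(3)/the matching: showing that the distinguished sections $Y_1(L)$ of $f$ and the tautological sections of $f_C$ correspond under \emph{some} consistent identification of $f^{-1}(t_0)$ with $f_C^{-1}(t_0)$ — in other words, that the two ``branch-point-to-transposition'' assignments are conjugate as representations of $\pi_1((\PP^2)^*\setminus C^*)$, not just that each individual local monodromy is a transposition. This requires understanding how the distinguished component $Y_1(L)$ varies as $L$ moves and degenerates (e.g.\ when $L$ becomes a bitangent, i.e.\ $L^\perp$ a node of $C$, where $f^{-1}(L)$ has two nodes), and checking the compatibility is genuinely global. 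One expects to handle this by first nailing down the isomorphism over the complement of the bad lines and then invoking normality/Zariski's main theorem to extend, but the bookkeeping of vanishing cycles and sections near the nodes and cusps of $C^*$ is where the real work lies; the cases $\deg C \le 6$ not covered by \cite{Kulikov1999} are exactly those where this has to be done without a black-box Chisini-type uniqueness statement.
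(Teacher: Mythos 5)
You have correctly located the pivotal geometric object — the distinguished section $Y_1(L)$ of $f^{-1}(L)$ over each tangent line $L$, supplied by Proposition~\ref{unique_section}(a) — and this is indeed the paper's starting point. But your proposal stops exactly at the step that constitutes the actual content of the proof. Both of your routes (constructing a map $X_C\dashrightarrow X$ by $(x,t)\mapsto Y_1(\gamma(x))\cap f^{-1}(t)$, or matching the monodromy representations of $f$ and $f_C$ so that Grauert--Remmert uniqueness applies) require knowing that the assignment $L\mapsto Y_1(L)$ is \emph{globally consistent}: that the distinguished components fit together into an algebraic family over $\hatC$, equivalently that the ``marked sheet'' can be chosen coherently as $L$ runs over all tangent lines, including through the degenerations at bitangents and at tangents through cusps of $C^*$. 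You yourself flag this as the ``main obstacle'' and offer no argument for it beyond ``one checks'' and ``one expects to handle this''; in particular, matching each local transposition of $f$ with one of $f_C$ generator-by-generator does not give conjugacy of the two representations of $\pi_1((\PP^2)^*\setminus C^*)$ — a single simultaneous conjugation is needed, which is precisely the Chisini-type uniqueness one cannot assume here. So as it stands the proposal is a plan with the central step missing, not a proof.

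For comparison, the paper closes this gap purely algebro-geometrically: it forms the incidence variety $\Zcal=\{(x,y)\in\hatC\times X\mid f(y)\in\gamma(x)^\perp\}$ and proves (Lemma~\ref{lemma:component}) that $\Zcal$ has a component $\Zcal_1$ whose fibers over $\hatC$ are exactly the curves $Y_1$. The trick is to pass to the geometric generic fiber of $\Zcal\to\hatC$: by constructibility of the \emph{type primaire} (EGA IV, 9.8.9) it has two components, one containing a single point of the pullback of $D=\pi_2^{-1}(L)$ ($L$ a fixed transversal line) and the other containing $\deg C^*-1>1$ such points; since these counts differ (here one uses $\deg C>2$), both components are invariant under $\mathrm{Gal}(\overline K/K)$, hence defined over the function field of $\hatC$, which is exactly the global consistency you were missing. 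After that, the sections in the family are pairwise disjoint because $(Y_1,Y_1)=0$, so $\Zcal_1\to X$ and $\Zcal_1\to X_C$ are generically bijective and, by Zariski's main theorem and smoothness of the targets, isomorphisms; $\Phi$ is their composition. (A side correction: the sections $Y_1(L)$ sweep out all of $X$, not ``a curve in $X$''.) If you want to salvage your monodromy-matching route instead, you would still need an argument of comparable substance to produce the simultaneous conjugation, and no soft local analysis near the nodes and cusps of $C^*$ will do it.
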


We know two proofs of this proposition, one ``topological'' and one 
algebraic geometric. For the expositon in this paper, we have chosen the 
latter since its rigorous version appears to be shorter.

 \begin{proof}
As before, denote the normalization of $C$ by $\nu\colon \hatC\to C$. Let 
$\gamma\colon \hatC\to C^*$ be the Gauss mapping, which attaches to a point 
$x\in C$ the tangent to $C^*$ at $\nu(x)$, where by tangent we mean the 
limiting tangent if $\nu(x)$ is a cusp of $C^*$ and the limiting tangent at 
the branch corresponding to $x$ if $\nu(x)$ is a node of~$C^*$.

If follows from projective duality that the definition of the surface
$X_C$ (see~\eqref{def_of_X_C}) may be rewritten as
 \[
X_C=\{(x,t)\in \hatC\times(\PP^2)^*\mid t\in \gamma(x)^\perp\}. 
 \]
Now put
 \[
\Zcal=\{(x,y)\in \hatC\times X\mid f(y)\in \gamma(x)^\perp\}.
 \]

  \begin{lemma}\label{lemma:component}
There exists a component $\Zcal_1\subset\Zcal$ consisting of
components of fibers that have intersection index $1$ with
$D=f^{-1}(L)$ \textup(or, equivalently, project isomorphically onto
tangents to~$C^*$; in the proof of Proposition~\ref{unique_section}
these components were called~$Y_1$\textup).
  \end{lemma}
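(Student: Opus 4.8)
The plan is to exhibit the component $\Zcal_1$ as the closure of the locus of ``$Y_1$-type'' components of fibers of the projection $\pr_1\colon\Zcal\to\hatC$ over a generic point of $\hatC$, and then to verify that this locus is genuinely one irreducible component of $\Zcal$ (not a proper subset of a larger one, and not reducible itself). First I would analyze the generic fiber of $\pr_1\colon\Zcal\to\hatC$: for a general point $x\in\hatC$, the line $\gamma(x)^\perp\subset(\PP^2)^*$ is a line through a general point of $C^*$, hence tangent to $C^*$ at exactly one point (namely at $\gamma(x)$, by the definition of the Gauss map and general position), and $\pr_1^{-1}(x)$ is identified with $f^{-1}(\gamma(x)^\perp)$. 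By Proposition~\ref{unique_section}, applied with $L=\gamma(x)^\perp$ and $q=\gamma(x)$ (here we use the hypothesis $\deg C>2$ to exclude case~(b)), this fiber decomposes as $Y_1(x)\cup Y_2(x)$ with $\left.f\right|_{Y_1(x)}$ an isomorphism onto $\gamma(x)^\perp$ and $(Y_1(x),Y_1(x))=0$, while $\left.f\right|_{Y_2(x)}$ has degree $>1$; the decomposition is canonical since $Y_1(x)$ is distinguished among the components of $f^{-1}(\gamma(x)^\perp)$ as the unique one mapping isomorphically to the line.

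Next I would set $\Zcal_1$ to be the closure in $\Zcal$ of $\bigcup_{x}\,\{x\}\times Y_1(x)$, the union taken over the (Zariski-open, dense) set of $x\in\hatC$ for which $\gamma(x)^\perp$ is tangent to $C^*$ at exactly one point and Proposition~\ref{unique_section}(a) holds. To see that this is a single irreducible component, I would note that the assignment $x\mapsto Y_1(x)$ is algebraic in $x$ over this open set — $Y_1(x)$ being cut out, inside $f^{-1}(\gamma(x)^\perp)$, by the condition of being the component on which $f$ is unramified of degree one, equivalently the component not passing through any other ramification point — so $\bigcup_x\{x\}\times Y_1(x)$ is an irreducible constructible set of dimension $1+1=2$, mapping dominantly and generically finitely (in fact with connected, one-dimensional fibers) onto $\hatC$; its closure $\Zcal_1$ is therefore an irreducible surface contained in $\Zcal$. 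Since $\dim\Zcal=2$ (the map $\pr_1\colon\Zcal\to\hatC$ has one-dimensional fibers everywhere, the fibers being the curves $f^{-1}(\gamma(x)^\perp)$), $\Zcal_1$ is a whole irreducible component of $\Zcal$, and by construction its general fiber over $\hatC$ consists precisely of the ``$Y_1$'' components, which have intersection index $1$ with $D=f^{-1}(L)$ for $L$ a general line — equivalently, project isomorphically onto the corresponding tangent line $\gamma(x)^\perp$.

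The main obstacle I anticipate is making precise that the family $x\mapsto Y_1(x)$ really does vary algebraically and that its closure does not pick up spurious extra components of fibers over the bad locus in $\hatC$ (where $\gamma(x)^\perp$ is bitangent to $C^*$, i.e.\ $\nu(x)$ lies over a node of $C$, or where $\gamma(x)^\perp$ is the tangent at a cusp of $C^*$). Over such points the fiber $f^{-1}(\gamma(x)^\perp)$ is singular (Proposition~\ref{singular_f^-1}) and may split into more than two components or acquire further $Y_1$-type components, and one must check that exactly one of them lies in the closure $\Zcal_1$. I would handle this by a normalization/flatness argument: pass to the normalization $\tilde\Zcal_1\to\Zcal_1$, observe that $\pr_1$ composed with this normalization is a proper family of connected curves over $\hatC$ with generic fiber $Y_1(x)\cong\gamma(x)^\perp\cong\PP^1$, hence (after possibly a further blow-down of $(-1)$-curves in special fibers, or simply by using that a smooth $\PP^1$-bundle over a curve is determined by its generic fiber) all fibers are irreducible, so no spurious components appear; alternatively one can argue that $\left.f\right|_{\Zcal_1}$ composed with $\pr_2\colon\Zcal\to X$ realizes $\Zcal_1$ as a $\PP^1$-bundle over $\hatC$ birational to $X_C$, which is exactly the content the next steps of the proof of Proposition~\ref{X_C=X} will exploit. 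With this in hand the lemma follows.
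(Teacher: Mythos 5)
Your setup and the invariant you single out (the component on which $f$ has degree $1$, equivalently intersection index $1$ with $D$) are the right ones, but the central step is asserted rather than proved. The claim that ``the assignment $x\mapsto Y_1(x)$ is algebraic in $x$, so $\bigcup_x\{x\}\times Y_1(x)$ is an irreducible constructible set of dimension $2$'' is exactly the content of the lemma and cannot be taken for granted. The danger is not over the bad locus but over the generic point: a priori a single irreducible component of $\Zcal$ could contain both $Y_1(x)$ and $Y_2(x)$ for general $x$, the two geometric components of its generic fiber being conjugate under the Galois group of the function field $K$ of $\hatC$ (equivalently, interchanged by monodromy as $x$ runs over loops in $\hatC$); in that case the union of the $Y_1(x)$ would not be Zariski-constructible and its closure would be that whole component rather than a family of $Y_1$'s. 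Characterizing $Y_1(x)$ fiber by fiber does not by itself produce an algebraic family: one must show that the fiberwise decomposition is defined over $K$, i.e.\ not permuted by Galois/monodromy. This is precisely what the paper's proof does: it passes to the geometric generic fiber $\Zcal_{\bar\eta}$, uses the constructibility of the \emph{type primaire} from EGA~IV to see that $\Zcal_{\bar\eta}$ still consists of two components with the pullback of $D$ reduced of length $\deg C^*$, one component meeting the ($K$-rational, hence Galois-invariant) subscheme $D$ in a single point and the other in $\deg C^*-1>1$ points, and concludes that Galois cannot interchange the two components, so a distinguished component exists already over $\eta$ and hence in $\Zcal$. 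You name the correct distinguishing invariant but never turn it into this descent (or monodromy-invariance) argument, which is where the whole difficulty of the lemma lies.

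Two smaller points: the projection $\Zcal_1\to\hatC$ is not ``generically finite'' --- its fibers are curves; and the lemma only concerns the general fibers of $\Zcal_1$, so your final paragraph about spurious components over bitangents and cusp tangents, normalizations, blow-downs and $\PP^1$-bundles addresses a non-issue (the paper's proof does not need, and does not claim, irreducibility of the special fibers), while leaving the genuine gap above unfilled.
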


  \begin{proof}[Proof of the lemma]
We will be using the language of schemes.

Denote the natural projection $\Zcal\to\hatC$ by $\pi_1$ and the morphism 
$(x,y)\mapsto f(y)$ by $\pi_2 \colon\Zcal\to(\PP^2)^*$. Pick a line 
$L\subset(\PP^2)^*$ transversal to $C^*$ and denote by $D\subset\Zcal$ the 
closed subscheme $\Pi_2^{-1}L$. Proposition~\ref{unique_section} shows that 
for almost all (in the sense of Zariski topology) closed points 
$x\in\hatC$, the fiber $\pi_1^{-1}(x)$ consists of two 
components $Y_1$ and $Y_2$ and $D\cap \pi_1^{-1}(x)$ is a reduced 
scheme of length $\deg C^*$; moreover, one of the points of $D\cap 
\pi_1^{-1}(x)$ lies in $Y_1$ and the remaining $\deg C^*-1$ 
lie in $Y_2$. Let $K$ stand for the function field of \hatC and 
$\bar K$ for its algebraic closure; put $\eta=\Spec K$, $\bar\eta=\Spec\bar 
K$, and let $\Zcal_\eta$ and $\Zcal_{\bar\eta}$ be the generic and generic 
geometric fibers of $\pi_1$ respectively. Then it is apparently evident 
that $\Zcal_{\bar\eta}$ consists of two components and the pullback of $D$ 
is also a reduced scheme of length $\deg C^*$ such that one of its points 
lies on one of the components of $\Zcal_{\bar\eta}$, while the remaining 
$\deg C^*-1$ points lie on the other component. To prove this assertion 
rigorously, we invoke EGAIV.

To wit, consider the coherent sheaf $\F=\Ooo_\Zcal\oplus\Ooo_D$ on
\Zcal.  The primary type (``type primaire'') \cite[Remarque
  9.8.9]{EGAIV-3} (roughly speaking, it is the collection of lengths
of fibers of \F at the generic points of components of its support and
at the generic points of various intersections of the closures of
these points) of pullbacks of \F to fibers over almost all (in the
sense of Zariski topology) closed points is the same; since the set of
points with the property ``the pullback of \F to the geometric fiber
over the point in question has a given \emph{type primaire}'' is
constructible due to~\cite[9.8.9.1]{EGAIV-3}, it follows that
\emph{type primaire} of the pullback of \F to $\Zcal_{\bar\eta}$ is
the same as that of its pullbacks to almost all fibers over closed
points, whence the assertion.

Now observe that the Galois group $G=\mathrm{Gal}(\overline K/K)$ acts on the 
set of the two components of $\Zcal_{\bar\eta}$. Since the pullback of $D$ 
to $\Zcal_{\bar\eta}$ is $K$-rational, it is $G$-invariant. On the other 
hand, one of these component contains only one point of $D$, while the 
other contains $\deg C^*-1>1$ such points (if $\deg C^*-1=1$, then $C^*$ 
and $C$ are conics, which contradicts the hypothesis). Thus, both these 
components are $G$-invariant, whence $\Zcal_\eta$ contains a component 
containing only one point from the pullback of~$D$. This proves the lemma.
  \end{proof}

Returning to the proof of the proposition, observe that
self-intersection index of almost all of the fibers of the projection
$\Zcal_1\to C^*$ as curves on $X$ is zero due to
Proposition~\ref{unique_section}, so they are disjoint. Thus, the
natural mapping $\Phi_1\colon\Zcal_1\to X$ (induced by projection to
the second factor) is generically one to one; since $X$ is smooth,
Zariski's main theorem implies that $\Phi_1$ is an isomorphism. On the
other hand, if we define the morphism $\Phi_2\colon \Zcal_1\to X_C$ by
the formula~$(x,y)\mapsto (x,f(y))$, then it is easy to see that
$\Phi_2$ is also generically bijective, whence isomorphic. Now we may
put $\Phi=\Phi_2\circ\Phi_1^{-1}$.
 \end{proof}

Now the $(\mathrm b) \Rightarrow (\mathrm a)$ implication in 
Theorem~\ref{th.main} follows immediately from Propositions~\ref{X_C=X} 
and~\ref{final_step}.

 \begin{note}\label{chisini:example}
If $\deg C=3$ and $\deg f=4$, Theorem~\ref{th.main} does not
hold. Indeed, if $f\colon v_2(\PP^2)\to\PP^2$ is a general projection
of the quadratic Veronese surface, then the branch locus of $f$ is the
curve $C^*\subset \PP^2$ that is dual to a smooth cubic~$C$. If
$L\subset \PP^2$ is a general line, then the restriction
$\pi=\left.f\right|_{f^{-1}(L)}\colon f^{-1}(L)\to L$ is a generic
covering of degree $4$ ramified over $L\cap C^*$, that is, over the
branch locus of the projection $\pr_{L^\perp}\colon C\to L$. By the
very construction, the mapping $\pi$ can be extended to the mapping
$f\colon v_2(\PP^2)\to\PP^2$ ramified over~$C^*$, but $\deg f=4\ne3$,
so $f$ is not equivalent to a projection of the plane cubic.
 \end{note} 

The following corollary of Theorem \ref{th.main} is essentially 
its ``topological'' reformulation:

 \begin{corollary}\label{th.main2}
Suppose that $C\subset \PP^2$ is a nodal curve of degree~$>2$ that is 
general enough in the sense of Definition~\ref{def:general_curve}. Suppose 
that a point $p\in\PP^2\setminus C$ is such that the composition $\pr 
_p\circ\nu\colon\hatC\to\PP^1$, where $\nu\colon\hatC\to C$ is the 
normalization and $\pr _p$ is the projection from $p$, has simple 
ramification. Denote by $L\subset(\PP^2)^*$ the line in the dual plane 
corresponding to the point $p\in\PP^2$, and suppose that $C'$ is a smooth 
projective curve and $\pi\colon C'\to L$ is a holomorphic mapping with 
simple ramification and such that the branch locus of $\pi$ coincides with 
$L\cap C^*$. If $\deg C=3$, assume in addition that $C$ has a node or $\deg 
f\ne4$. Then the following two conditions are equivalent.

\textup{(a)} There exists an isomorphism $\ph\colon C'\to\hat C$ such that
$(\pr _p\circ\nu)\circ \ph=\pi$.

\textup{(b)} The covering $\pi^{-1}(L\setminus C^*)\to L\setminus C^*$
can be extended to a covering $X_0\to(\PP^2)^*\setminus C^*$ with
respect to which all nodes and cusps of the curve~$C^*$ are good in the
sense of Definitions~\ref{def:good.nodes} and~\ref{def:bad.cusp}.
 \end{corollary}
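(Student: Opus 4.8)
The plan is to deduce Corollary~\ref{th.main2} from Theorem~\ref{th.main} by reconciling the two formulations of condition~(b). In Theorem~\ref{th.main}, condition~(b) asks for a \emph{smooth} projective surface $X$ and a finite morphism $f\colon X\to(\PP^2)^*$ ramified exactly over $C^*$ extending $\pi$; in the corollary, condition~(b) is the topological statement that the covering $\pi^{-1}(L\setminus C^*)\to L\setminus C^*$ extends to a covering $X_0\to(\PP^2)^*\setminus C^*$ for which all nodes and cusps of $C^*$ are good. Since condition~(a) is literally the same in both statements (note that the line $L\subset(\PP^2)^*$ of the corollary is just $p^\perp$, so that $L\cap C^* = p^\perp\cap C^*$ and the generalized projection $\pr_p\circ\nu$ is unchanged), it suffices to prove that the two versions of~(b) are equivalent, and then invoke Theorem~\ref{th.main}.

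First I would show that the smooth-surface version implies the topological version. Given $(X,f)$ as in Theorem~\ref{th.main}(b), restrict $f$ over the complement of its branch locus to obtain a finite topological covering $X_0 = X\setminus f^{-1}(C^*)\to(\PP^2)^*\setminus C^*$; this extends the covering $\pi^{-1}(L\setminus C^*)\to L\setminus C^*$ because $\left.f\right|_{f^{-1}(L)}\circ\ph=\pi$ and $L^\perp = p\notin C$, so by Proposition~\ref{smooth_f^-1} the curve $f^{-1}(L)$ is smooth and $f^{-1}(L\setminus C^*) = f^{-1}(L)\setminus f^{-1}(C^*)$ is the honest restriction of the covering $X_0$. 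Since $X$ is smooth, Proposition~\ref{not.smooth.over.bad} (applied with $D = C^*$, whose singularities are nodes and standard cusps by the genericity of $C$) forces every node and every cusp of $C^*$ to be good: a bad node or bad cusp would produce a Du~Val $A_1$ or $A_2$ point on $X$, contradicting smoothness.

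Conversely, suppose the topological version holds, so there is a covering $X_0\to(\PP^2)^*\setminus C^*$ extending $\pi^{-1}(L\setminus C^*)\to L\setminus C^*$ with all nodes and cusps of $C^*$ good. The restriction of this covering to the line $L$, which is transversal to $C^*$ since $p\notin C$ and $\pr_p\circ\nu$ has simple ramification, coincides with $\pi^{-1}(L\setminus C^*)\to L\setminus C^*$ and hence has simple ramification; by Proposition~\ref{prop:same_mono} the covering $X_0$ is therefore generic. Now apply the Grauert--Remmert theorem together with the GAGA-style result cited after Proposition~\ref{prop:same_mono} (exactly as in Corollary~\ref{cor:factors=>exists_X}) to extend $X_0\to(\PP^2)^*\setminus C^*$ to a finite morphism $f\colon X\to(\PP^2)^*$ with $X$ a normal projective surface; its branch locus is $C^*$. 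By Proposition~\ref{not.smooth.over.bad}, over a good node or good cusp the surface $X$ is smooth, and over a smooth point of $C^*$ it is smooth as well; since by hypothesis \emph{all} nodes and cusps of $C^*$ are good, $X$ is smooth everywhere, and the ramification of $f$ is simple by construction. Finally, the isomorphism $\ph\colon C'\to f^{-1}(L)$ identifying $\pi$ with $\left.f\right|_{f^{-1}(L)}$ comes from the fact that $f^{-1}(L\setminus C^*)\to L\setminus C^*$ is by construction the covering $\pi^{-1}(L\setminus C^*)\to L\setminus C^*$, and both $f^{-1}(L)$ (by Proposition~\ref{smooth_f^-1}, since $L^\perp = p\notin C$) and $C'$ are smooth compactifications of that covering, hence canonically isomorphic over $L$. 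This establishes condition~(b) of Theorem~\ref{th.main}, and the corollary follows.

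The only point requiring care is the passage from the abstract topological covering $X_0$ to the algebraic morphism $f\colon X\to(\PP^2)^*$ and the identification of $f^{-1}(L)$ with the given $C'$; but this is entirely parallel to the already-proved Corollary~\ref{cor:factors=>exists_X} and Proposition~\ref{smooth_f^-1}, so there is no genuine obstacle — the corollary is a formal consequence of Theorem~\ref{th.main} and the local analysis in Proposition~\ref{not.smooth.over.bad}.
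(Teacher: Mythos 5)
Your proof is correct and follows essentially the same route as the paper, which deduces the corollary directly from Theorem~\ref{th.main} together with Proposition~\ref{not.smooth.over.bad} (using the Grauert--Remmert extension exactly as in Corollary~\ref{cor:factors=>exists_X}); you merely spell out the equivalence of the two versions of condition~(b) that the paper leaves implicit.
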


 \begin{proof}
Immediate from Theorem~\ref{th.main} and 
Proposition~\ref{not.smooth.over.bad}.
 \end{proof}

Our proof of Theorem~\ref{th.main} implies the following corollary:

 \begin{corollary}\label{chisini_duals}
Suppose that $C\subset\PP^2$ is a nodal curve that is general enough
in the sense of Definition~\ref{def:general_curve}. If $C$ is not a
smooth cubic, then any two generic coverings $X\to(\PP^2)^*$, ramified
over $C^*$ and with smooth $X$, are equivalent.
 \end{corollary}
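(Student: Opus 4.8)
The plan is to deduce Corollary~\ref{chisini_duals} essentially for free from the machinery already assembled, namely Propositions~\ref{X_C=X} and~\ref{final_step}, by observing that these results do not in fact require the covering to restrict to anything in particular over a line $p^\perp$. First I would recall the statement of Proposition~\ref{X_C=X}: given a general enough nodal (or smooth) curve $C\subset\PP^2$ of degree $>2$, not a smooth cubic (the extra $\deg f\ne 4$ hypothesis in that proposition is only needed in the cubic case, which we are excluding), any smooth projective surface $X$ together with a finite morphism $f\colon X\to(\PP^2)^*$ with simple ramification and branch locus $C^*$ admits an isomorphism $\Phi\colon X\to X_C$ with $f_C\circ\Phi=f$. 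That is exactly the assertion that $(X,f)$ is equivalent, as a covering of $(\PP^2)^*$ ramified over $C^*$, to the canonical pair $(X_C,f_C)$.

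The key step is then purely formal. Suppose $(X,f)$ and $(X',f')$ are two generic coverings of $(\PP^2)^*$ ramified over $C^*$ with $X$ and $X'$ smooth. The condition that a covering be \emph{generic} in the sense following Proposition~\ref{prop:same_mono} is precisely that its restriction to some transversal line has simple ramification; but we should check that this matches the ``simple ramification'' hypothesis of Proposition~\ref{X_C=X}. Indeed, by Proposition~\ref{not.smooth.over.bad} applied with $D=C^*$, smoothness of $X$ forces every node and cusp of $C^*$ to be good, and over a smooth point of $C^*$ the covering is locally $(x,y)\mapsto(x,y^2)$; restricting to a transversal line $L$ therefore gives a covering with only simple ramification in the sense of Definition~\ref{def:moderate_covering}. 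Hence each of $(X,f)$ and $(X',f')$ satisfies the hypotheses of Proposition~\ref{X_C=X}, so there are isomorphisms $\Phi\colon X\to X_C$ and $\Phi'\colon X'\to X_C$ with $f_C\circ\Phi=f$ and $f_C\circ\Phi'=f'$. Composing, $\Phi'^{-1}\circ\Phi\colon X\to X'$ is an isomorphism intertwining $f$ and $f'$, which is the assertion.

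The only point requiring a little care — and the step I expect to be the main (mild) obstacle — is the bookkeeping about hypotheses: verifying that ``generic covering with smooth total space'' does indeed imply ``simple ramification over $C^*$'' in the precise sense demanded by Proposition~\ref{X_C=X}, and confirming that the degenerate case carved out in that proposition (smooth cubic, $\deg f=4$) is exactly the case excluded in the corollary. For the first, one uses Proposition~\ref{not.smooth.over.bad} as above; for the second, the excluded locus in Proposition~\ref{X_C=X} is ``$\deg C=3$ and $C$ smooth and $\deg f=4$'', which is contained in ``$C$ is a smooth cubic'', so the hypothesis ``$C$ is not a smooth cubic'' of the corollary is even slightly stronger than needed. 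With this checked, the proof is a two-line diagram chase. I would write it as: \emph{By Proposition~\ref{not.smooth.over.bad}, a generic covering with smooth total space has simple ramification over $C^*$; since $C$ is not a smooth cubic, Proposition~\ref{X_C=X} applies and identifies any such covering with $(X_C,f_C)$. Two coverings both equivalent to $(X_C,f_C)$ are equivalent to each other.}
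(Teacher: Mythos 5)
Your proof is correct and follows exactly the route the paper intends: the corollary is stated as a consequence of the proof of Theorem~\ref{th.main}, i.e.\ of Proposition~\ref{X_C=X}, which identifies every generic covering with smooth total space and branch locus $C^*$ (outside the smooth-cubic, degree-$4$ case) with the canonical pair $(X_C,f_C)$, and two coverings equivalent to $(X_C,f_C)$ are equivalent to each other. Your hypothesis bookkeeping (genericity giving simple ramification via Proposition~\ref{not.smooth.over.bad}, and the excluded case of Proposition~\ref{X_C=X} being contained in ``$C$ is a smooth cubic'') matches the paper's implicit argument.
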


If $\deg C \ge 7$, or $\deg C = 6$ and $C$ has at least one node, or
$\deg C = 5$ and $C$ has at least three nodes, then this corollary is
implied by \cite[Theorem~10]{Kulikov1999}.

It is well known (see, for example,~\cite{Kulikov1999}) that this assertion 
is wrong if $C$ is a smooth cubic. A~relevant counterexample is
  essentially contained in our proof of
  Proposition~\ref{unique_section} (Case~1).

\section{The smoothness condition is (sometimes) 
essential}\label{sec:example}

In this section we show that the smoothness condition in Theorem 
\ref{th.main} is not trivial, at least for the case of smooth curves of 
degree $d=3$ or $4$ and mappings of degree~$d$.

To wit, we prove the following.

 \begin{proposition}\label{prop:example}
Suppose that $d=3$ or $4$. Then there exist a smooth curve $C\subset\PP^2$, 
$\deg C=d$, a line $p^\perp\subset(\PP^2)^*$ (where $p \in \PP^2$ is a 
point) transversal to~$C$, a smooth projective curve $C'$, and a 
holomorphic mapping $\pi\colon C'\to p^\perp$ with simple ramification such 
that $\deg\pi=d$ and $\pi$ is ramified exactly over $C^*\cap p^\perp$ with 
the following properties.

\textup{(1)} For a point $t_0\in p^\perp \setminus C^*$, the fiber monodromy
homomorphism $\pi_1(p^\perp\setminus C^*,t_0)\to \Perm (\pi^{-1}(t_0))$ can
be factored through the homomorphism $\pi_1(p^\perp\setminus C^*,t_0)\to
\pi_1((\PP^2)^*\setminus C^*,t_0)$ induced by the embedding
$p^\perp \setminus C^*\hookrightarrow (\PP^2)^*\setminus C^*$.

\textup{(2)} The mapping $\pi\colon C'\to p^\perp$ is \emph{not} equivalent
to the projection $\pr _p\colon C\to p^\perp$. 
\end{proposition}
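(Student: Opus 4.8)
The plan is to reduce both assertions to the construction of a single global object --- a connected finite covering of $(\PP^2)^*\setminus C^*$ whose associated normal surface is \emph{singular} --- and then to produce such a covering for $d=3$ and $d=4$.

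First I would record the following reformulation, valid for any smooth $C$ of degree $d$. Suppose $\mu\colon\pi_1((\PP^2)^*\setminus C^*,t_0)\to S_d$ is a transitive homomorphism sending every meridian of $C^*$ to a transposition, for which at least one node or cusp of $C^*$ is \emph{bad} in the sense of Definitions~\ref{def:good.nodes} and~\ref{def:bad.cusp}. Restricting $\mu$ to $\pi_1(p^\perp\setminus C^*,t_0)$, for a line $p^\perp$ transversal to $C^*$, produces a degree-$d$ covering $\pi\colon C'\to p^\perp$; transversality makes $\pi$ simply ramified over $p^\perp\cap C^*$, with smooth $C'$. Its monodromy factors through $\pi_1((\PP^2)^*\setminus C^*,t_0)$ by construction, so property~(1) holds, and by Corollary~\ref{cor:factors=>exists_X} it extends to a unique normal surface $f\colon X\to(\PP^2)^*$; by Proposition~\ref{not.smooth.over.bad} the chosen bad node or cusp forces an $A_1$ or $A_2$ point on $X$, so $X$ is singular. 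On the other hand $\pr_p=\pr_p\circ\nu$ extends, by Proposition~\ref{gen(a)=>(b)}, to the \emph{smooth} surface $f_C\colon X_C\to(\PP^2)^*$, all of whose nodes and cusps are therefore good. Were $\pi$ equivalent to $\pr_p$, the two line coverings would have conjugate monodromy and hence, by the uniqueness in Corollary~\ref{cor:factors=>exists_X}, isomorphic extensions --- impossible, one being singular and the other smooth. Thus property~(2) holds too, and the proposition reduces to exhibiting, for $d=3$ and $d=4$, a transitive $S_d$-valued monodromy with transposition meridians and at least one bad node or cusp.

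For $d=4$ I would use Vakil's analysis \cite{Vakil}. Take $C$ a general smooth plane quartic, so that $C^*$ has degree $12$ with $24$ cusps and $28$ nodes, and $p^\perp$ meets it in $12$ points. Vakil produces, over these branch points, degree-$4$ coverings whose total space is a hyperelliptic curve of genus $3$; since a general plane quartic $\hatC=C$ is not hyperelliptic, no isomorphism $C'\to\hatC$ exists, which already yields property~(2). The work is to see that one of these coverings extends over $(\PP^2)^*\setminus C^*$, i.e.\ that its Hurwitz monodromy tuple respects the braid relations attached to the $24$ cusps and $28$ nodes; by the reduction the resulting surface is then automatically singular (a smooth extension would, by Corollary~\ref{chisini_duals}, be equivalent to $f_C$ and force $C'$ to be the non-hyperelliptic plane quartic). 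For $d=3$ the dual $C^*$ of a smooth cubic is a sextic with nine cusps and \emph{no} nodes, so any singular extension must have a bad cusp. Here I would pick a cubic of large symmetry (a Hesse or Fermat cubic), compute $\pi_1((\PP^2)^*\setminus C^*)$ by Zariski--van Kampen from a generic pencil of lines --- six meridians over a reference fibre, subject to the braid relations from the nine cusps --- and write down a transitive homomorphism to $S_3$ sending meridians to transpositions but sending the two local generators at (at least) one cusp to the \emph{same} transposition; its restriction to $p^\perp$ is then the required degree-$3$ covering, inequivalent to $\pr_p$ precisely because its extension acquires an $A_2$ point.

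I expect the main obstacle to be property~(1) --- the \emph{global} consistency of the alternative monodromy. For $d=3$ this is the verification that the bad-cusp assignment satisfies every Zariski--van Kampen relation simultaneously, together with transitivity so that $C'$ is connected; for $d=4$ it is the identification of Vakil's abstract Hurwitz covering with one that extends over the dual-plane complement, and not merely over the space of branch divisors. Once a global homomorphism with a bad node or bad cusp is in hand, properties~(1) and~(2) follow formally from the reduction above.
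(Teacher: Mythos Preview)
Your reduction in the first paragraph is correct and is exactly how the paper deduces property~(2): equivalence of $\pi$ with $\pr_p$ would, via Grauert--Remmert uniqueness, force the singular extension $X$ to coincide with the smooth extension $X_C$. (Implicitly you use that $\pi_1(p^\perp\setminus C^*)\to\pi_1((\PP^2)^*\setminus C^*)$ is \emph{surjective}, so that the factorization of the line monodromy through the surface group is unique; this is Zariski's hyperplane theorem and should be stated.)

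Where you diverge from the paper is in the construction of the singular global cover, and this is where the real gap lies. You propose to manufacture $\mu$ combinatorially --- Zariski--van Kampen plus a hand-built bad-cusp assignment for $d=3$, or an extension of one of Vakil's hyperelliptic line coverings for $d=4$ --- and you correctly flag global consistency as the obstacle, but you do not carry it out. For $d=4$ in particular there is no a~priori reason any of Vakil's $135$ coverings of $p^\perp$ should extend over $(\PP^2)^*\setminus C^*$: they were produced over a single line, not over the plane. The paper sidesteps this entirely with a geometric trick: it exhibits, in each degree, a \emph{self-dual} surface $X\subset\PP^3$ with only isolated singularities (for $d=3$, the projective closure of $x_1x_2x_3=1$, with three $A_2$ points; for $d=4$, a Kummer surface, with sixteen $A_1$ points, plus Lemma~\ref{Pardini} to ensure the generic hyperplane section is general enough). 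Self-duality gives $B^*\cong X^*\cap r^\perp\cong X\cap r^\perp$ for the branch locus $B$ of a generic linear projection $\pr_r\colon X\to\PP^2$, so $B=C^*$ with $C$ smooth of degree~$d$; a general plane section $C'$ of $X$ is then the desired degree-$d$ covering of a line, and it comes \emph{already extended} to the finite map $X\to(\PP^2)^*$ with singular source. No fundamental-group computation is needed.
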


We will need the following lemma.

  \begin{lemma}\label{self-dual:3,4}
If $d=3$ or $4$, then there exists a surface with isolated singularities 
$X\subset\PP^3$, $\deg X=d$, such that its dual $X^*\subset(\PP^3)^*$ is 
isomorphic to $X$ and the general hyperplane section of $X$ is general 
enough in the sense of Definition~\ref{def:general_curve}.
  \end{lemma}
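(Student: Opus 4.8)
The plan is to produce, for each of $d=3$ and $d=4$, an explicit self-dual hypersurface in $\PP^3$. For $d=3$ I would look for a cubic surface whose equation is a ``symmetric'' expression that is manifestly preserved (up to a linear change of coordinates) under the Legendre-type transform taking $X$ to $X^*$; the natural candidate is a cubic with a large symmetry group --- e.g. the Cayley cubic $\sum 1/x_i = 0$ (equivalently $\sum_{i<j<k} x_ix_jx_k = 0$), which has four nodes and is classically known to be projectively self-dual. For $d=4$ the candidate is a Kummer-type or otherwise highly symmetric quartic; the cleanest choice is the quartic symmetroid, i.e.\ the determinant of a generic $4\times 4$ symmetric matrix of linear forms, whose dual is again a quartic symmetroid and which has exactly ten nodes. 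So the first step is: \emph{write down the equation, identify the singular locus as a finite set of nodes, and exhibit the projective isomorphism $X\xrightarrow{\sim}X^*$.}

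For the self-duality itself I would argue as follows. The dual variety $X^*$ is the image of the rational Gauss map $x\mapsto [\partial_0 F(x):\dots:\partial_3 F(x)]$. When $F$ is the symmetric determinant $\det(M(x))$ with $M(x)=\sum x_i A_i$, the partials are, by Jacobi's formula, $\partial_i F = \operatorname{tr}(\operatorname{adj}(M)\,A_i)$, i.e.\ (at a smooth point, where $\operatorname{rank} M = 3$) they are the coordinates of the rank-one matrix $\operatorname{adj}(M(x))$ in the basis dual to $(A_i)$. Thus the Gauss map sends $X$ to the locus of symmetric matrices (in the dual coordinates) of rank $\le 3$, which is again a symmetroid of degree $4$; choosing the $A_i$ generically and the dual basis appropriately makes this dual symmetroid projectively equivalent to $X$ itself. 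The Cayley cubic admits the analogous description with $3\times3$ matrices (it is the locus $\det(\operatorname{diag}) = 0$ after a coordinate change), and the same Jacobi-formula computation shows its dual is again a four-nodal cubic, projectively equivalent to it.

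Next I would verify the genericity of the general hyperplane section. Since $X$ has only finitely many singular points (four nodes for $d=3$, ten nodes for $d=4$), Bertini's theorem guarantees that a general hyperplane $H$ misses $\operatorname{Sing}(X)$ and meets $X$ in a smooth plane curve $C$ of degree $d$. The three conditions of Definition~\ref{def:general_curve} --- simple inflexions, no line tangent at more than two points, no bitangent through an inflexion --- are each satisfied on a dense open subset of the space of plane curves of degree $d$ (they are ``general position'' conditions on the curve together with its dual), so it suffices to know that the family $\{X\cap H\}_{H}$ of hyperplane sections is not entirely contained in the complementary closed subset. This follows because the family is non-isotrivial (the moduli of $C$ vary with $H$ already because the $j$-invariant, resp.\ the moduli of a genus-$3$ quartic, is non-constant for a generic symmetroid --- if it were constant one could derive that $X$ is swept out by a pencil of curves of fixed modulus, contradicting its being an irreducible surface of the stated degree with only nodes), hence the image of $H\mapsto [X\cap H]$ in the moduli space is positive-dimensional and cannot lie in the locus of non-general-enough curves. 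Alternatively, and more elementarily, one exhibits a \emph{single} explicit hyperplane section that is general enough and invokes openness.

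The main obstacle I expect is the last point: rigorously checking that the general hyperplane section satisfies all of Definition~\ref{def:general_curve}, rather than merely being smooth. The cleanest route is probably to observe that for a general $H$ the dual curve $C^* \subset H^* $ is the hyperplane section $X^* \cap L$ of the self-dual $X^*$ by the line $L = H^\perp$-translate (more precisely, $C^*$ is cut out on the plane $(\PP^2)^*$ corresponding to $H$ by the intersection with $X^*$), so $C^*$ has only the mild singularities forced by $X^*$ having only nodes --- namely nodes and cusps and no worse, and no triple tangents --- which is exactly the reformulation (noted in the excerpt just before Proposition~\ref{smooth_f^-1}) of the three conditions in Definition~\ref{def:general_curve}. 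In other words, self-duality of $X$ lets one transfer the good behaviour of $X$ to good behaviour of \emph{both} $C$ and $C^*$ simultaneously, which is precisely what the definition of ``general enough'' encodes; checking this carefully for the two explicit surfaces is the real content of the proof.
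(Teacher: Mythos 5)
Your two explicit candidates are not in fact self-dual, and this sinks the core of the construction. For a surface in $\PP^3$ of degree $d$ with $\delta$ nodes and $\kappa$ ordinary cusps as its only singularities, the degree of the dual surface is $d(d-1)^2-2\delta-3\kappa$. The four-nodal Cayley cubic therefore has dual of degree $12-8=4$ (classically, its dual is the Steiner Roman quartic, not a cubic), so it is not projectively self-dual. Likewise the generic $4\times4$ quartic symmetroid has exactly $10$ nodes, hence its dual has degree $36-20=16$; your Jacobi-formula argument only shows that the Gauss image sits inside a projected rank locus, not that it is a quartic, and the class computation shows it cannot be. The surfaces that actually work are different: for $d=3$ the paper takes the projective closure of $x_1x_2x_3=1$, i.e.\ $x_1x_2x_3=x_0^3$, which has three $A_2$ points (class $12-9=3$) and is projectively self-dual; for $d=4$ one must take a Kummer quartic with its $16$ nodes (class $36-32=4$), whose self-duality is a known theorem. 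So the first step of your plan, ``write down the equation and exhibit the isomorphism $X\cong X^*$'', fails for both of your choices.

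The genericity verification is also too weak at the one point where it matters. For $d=3$ nothing needs checking (every smooth plane cubic is automatically general enough), and for $d=4$ the conditions of Definition~\ref{def:general_curve} reduce to: the general plane section, a smooth quartic, has only simple inflexions (no hyperflexes). Quartics with a hyperflex form a \emph{divisor} in the space of plane quartics, so your argument ``the family of sections is non-isotrivial, hence its image in moduli is positive-dimensional and cannot lie in the bad locus'' does not work: a positive-dimensional family can perfectly well lie inside a divisor. Your fallback --- exhibit a single good section and use openness --- is the right idea, but you do not produce such a section, and doing so for a fixed Kummer surface is exactly the nontrivial point. The paper circumvents it by reversing the logic (Pardini's lemma): every non-hyperelliptic genus-$3$ curve is a plane section of \emph{some} Kummer surface (via the Prym variety of an \'etale double cover and the Abel--Prym embedding into $|2\Theta|$), so one first chooses a smooth quartic with only simple inflexions and then obtains a Kummer surface having it as a plane section; openness of the condition then handles the general section of that particular surface. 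Without this (or an equivalent explicit verification), your proof of the last condition in the lemma is incomplete even after the self-duality issue is repaired.
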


 \begin{proof}[Proof of the proposition modulo Lemma~\ref{self-dual:3,4}] 
Let $X \subset \PP^3$ be a surface from Lemma~\ref{self-dual:3,4}. For a 
general point $r\in\PP^3$, the projection $\pr_r\colon X\to \PP^2$, where 
$\PP^2$ is a projective plane (incidentally, $\PP=(r^\perp)^*$), is simply 
ramified. If $B\subset\PP^2$ is the branch locus of $\pr_r$, then a line 
$\ell\subset\PP^2$ is tangent to $B$ if and only if the plane $\Pi_r$ 
spanned by $r$ and $\ell$ is tangent to $X$. Thus, the dual curve 
$B^*\subset(\PP^2)^*$ is projectively isomorphic to $X^*\cap r^\perp =C$; 
if $p$ is general, Lemma~\ref{self-dual:3,4} asserts that $C$ is a general 
enough smooth plane curve.

Take $p \in \PP^3$ general and let $C'=X\cap\Pi_p$; denote the restriction 
of $\pr_p$ to~$C'$ by $\pi\colon C'\to L$. We see (taking into account that 
$B=C^*$) that $\pi$ extends to a finite morphism $\pr_p\colon X\to 
(\PP^2)^*$ ramified over $C^*$, where the surface $X$ is singular. Thus, 
$\pi$ satisfies Condition (b) of Theorem \ref{th.main} with the smoothness 
omitted. On the other hand, $\pi$ cannot be equivalent to the projection 
$\pr_r$. Indeed, if such an equivalence existed, then by Proposition 
~\ref{gen(a)=>(b)} the restriction of $\pi$ to the preimage of the 
complement of the branch locus could be extended to a finite morphism 
$X_C\to\PP^2$ ramified over $C^*$ with smooth $X_C$. However, if such an 
extension to a finite mapping $X\to(\PP^2)^*$ ramified over $C^*$ and with 
normal (in particular, smooth) $X$ exists, this extension is unique by the 
theorem of Grauert and Remmert~\cite[Expos\'e~XII, Th\'eor\`eme 5.4]{SGA1} 
we cited before. Since we already have such an extension with singular $X$, 
we arrive at the desired contradiction. 
 \end{proof}

It remains to prove Lemma~\ref{self-dual:3,4}. If $d=3$, we let $X$ be the 
projective closure of surface in $\mathbb A^3$ defined by the equation 
$x_1x_2x_3=1$ (this surface was considered in~\cite{ACT}). A direct 
computation shows that $X$ contains only three singular points (of the type 
$A_2$) and that $X$ is projectively isomorphic to~$X^*$. Since any smooth 
plane cubic is automatically general enough in the sense of 
Definition~\ref{def:general_curve}, the surface $X$ is as required.

To treat the case $d=4$ we need to recall the definition and some 
properties of Kummer surfaces.

Suppose that $J$ is a principally polarized Abelian variety of 
dimension~$2$ that is not the product of two elliptic curves and 
$\Theta\subset J$ is a theta divisor of polarization. Then the complete 
linear system $\lmod 2\Theta\rmod$ is base point free and defines a finite 
morphism $J\to\PP^3$; the image of this morphism is a surface of degree $4$ 
with exactly $16$ singular points of the type $A_1$; the surfaces obtained 
by this construction are called Kummer surfaces (see, for example, 
\cite[Section 10.3]{Dolgachev}). Any Kummer surface $X\subset\PP^3$ is 
projectively isomorphic to its dual $X^*\subset(\PP^3)^*$ (see 
\cite[Theorem 10.3.19]{Dolgachev}). Thus, to finish the proof of 
Lemma~\ref{self-dual:3,4}, it remains to show that there exists a Kummer 
surface such that its general plane section is general enough in the sense 
of Definition~\ref{def:general_curve}; since such sections are plane 
quartics, it suffices to establish that a general plane section has only 
simple inflexion points.

The following lemma and its proof were pointed out to us by Rita Pardini. 

 \begin{lemma}[R. Pardini]\label{Pardini}
Each non-hyperelliptic curve of genus $3$ \textup(smooth and 
projective\textup) is isomorphic to a plane section of an appropriate 
Kummer surface in~$\PP^3$.
 \end{lemma}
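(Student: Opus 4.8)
The plan is to build the required Kummer surface directly, for each given curve, by means of a Prym construction. Since $C$ is non-hyperelliptic, its canonical map embeds it as a smooth plane quartic $C\subset|K_C|^*\cong\PP^2$, so it suffices to produce a Kummer surface $X\subset\PP^3$ together with a hyperplane $H\cong\PP^2$ with $X\cap H$ equal to this canonical model. First I would fix a nonzero two-torsion class $\eta\in JC[2]$ and let $\pi\colon\tilde C\to C$ be the associated connected \'etale double cover, with $g(\tilde C)=5$ and covering involution $\sigma$; let $P=\mathrm{Prym}(\tilde C/C)$ be its Prym variety, a two-dimensional principally polarized abelian variety on which $\sigma$ acts as $-1$. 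The Abel--Prym map $\alpha\colon\tilde C\to P$, $x\mapsto[x-\sigma x]$ (suitably normalized), satisfies $\alpha\circ\sigma=-\alpha$; hence its composition with the quotient $P\to\mathrm{Kum}(P)=P/{\pm1}\hookrightarrow|2\Theta_P|^*=\PP^3$ is $\sigma$-invariant and descends to a morphism $\bar\psi\colon C\to\mathrm{Kum}(P)\subset\PP^3$.

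The next step is to identify this $\bar\psi$ cohomologically. Pulling $\Ooo_{\PP^3}(1)$ back along $P\to\mathrm{Kum}(P)$ gives $2\Theta_P$, so on $\tilde C$ one has $\pi^*\bar\psi^*\Ooo_{\PP^3}(1)\cong\alpha^*(2\Theta_P)$. A standard Abel--Prym computation gives $\deg\alpha^*\Theta_P=4$, hence $\deg_C\bar\psi^*\Ooo_{\PP^3}(1)=4$; and in fact $\bar\psi^*\Ooo_{\PP^3}(1)\cong K_C$ rather than the Prym-canonical bundle $K_C\otimes\eta$ (the latter has only two sections, $h^0(K_C\otimes\eta)=2$). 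Consequently the pulled-back bundle has $h^0=3$: the image of $\bar\psi$ spans a plane $H\cong\PP^2\subset\PP^3$, and up to this linear embedding of $H$ the map $\bar\psi$ is the canonical embedding of $C$. Its image is therefore a smooth plane quartic contained in the quartic surface $\mathrm{Kum}(P)$; since $\mathrm{Kum}(P)\cap H$ is a plane curve of degree $\deg\mathrm{Kum}(P)=4$ containing this smooth quartic, the two coincide. Thus $C\cong\mathrm{Kum}(P)\cap H$ is a plane section (automatically avoiding the nodes of $X$, because the section is smooth), as required.

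The hard part will be the remaining hypothesis: that $\mathrm{Kum}(P)$ be a genuine Kummer quartic, i.e.\ that $P$ be an \emph{irreducible} abelian surface and not a product of two elliptic curves. Indeed, if $P\cong E_1\times E_2$ then $|2\Theta_P|$ maps $P$ two-to-one onto the quadric $\PP^1\times\PP^1\subset\PP^3$ instead of birationally onto a quartic, and the construction collapses---consistently with the fact that a genus-$3$ curve cannot be a plane section of a quadric. So everything reduces to showing that \emph{every} non-hyperelliptic $C$ admits at least one $\eta\in JC[2]\setminus\{0\}$ for which $P$ is irreducible. The set of $(C,\eta)$ with decomposable Prym is the preimage under the dominant Prym map $\mathcal{R}_3\to\mathcal{A}_2$ of the product locus $\mathcal{A}_{1,1}\subset\mathcal{A}_2$, hence a proper closed subset of $\mathcal{R}_3$; the delicate point, which a naive dimension count does not settle, is that this subset contains no entire two-torsion fibre of $\mathcal{R}_3\to\M_3$. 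I expect this to follow from the classification of double covers with decomposable Prym (via the Recillas--Donagi tetragonal correspondence) together with the transitivity of the $\mathrm{Sp}_6(\mathbb{F}_2)$-monodromy on the $63$ classes $\eta$: decomposability of $P$ forces $C$ to carry correlated special linear series that cannot all be present for the full set of $63$ classes simultaneously. This is the step I would treat most carefully, since it is precisely what upgrades the easy dominance statement to the surjectivity asserted in the lemma.
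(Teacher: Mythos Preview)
Your approach is the same as the paper's: take an \'etale double cover $\pi\colon\tilde C\to C$, form the Prym $(P,\Theta)$, and push $\tilde C$ into $\PP^3$ via $|2\Theta|$. The paper, however, is more direct at the key step. Rather than computing $\bar\psi^*\Ooo_{\PP^3}(1)$ and arguing it is $K_C$, it simply cites \cite[Proposition~12.5a]{BiLa} for the fact that the Abel--Prym map embeds $\tilde C$ in $P$ as a \emph{member of} $|2\Theta|$; then the $|2\Theta|$-image of $\tilde C$ is automatically a hyperplane section of the Kummer surface, and since $-1$ on $P$ restricts to the covering involution $\sigma$, that section is $\tilde C/\sigma\cong C$. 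Your detour through ``$\bar\psi^*\Ooo(1)\cong K_C$ because $h^0(K_C\otimes\eta)=2$'' is loose as written: you have not yet shown the image spans more than a line, so the section count does not by itself exclude $K_C\otimes\eta$. The clean fix is to note that every section of $2\Theta$ on $P$ is $(-1)$-invariant, hence pulls back to a $\sigma$-invariant section of $K_{\tilde C}$, i.e.\ to $H^0(C,K_C)$---or just to invoke $\tilde C\in|2\Theta|$ as the paper does.

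You are right to isolate the indecomposability of $P$ as the delicate point, and your proposed attack (Prym map to $\mathcal A_2$, Recillas/tetragonal description of the decomposable locus, monodromy on the $63$ classes) is a reasonable route. It is worth noting that the paper's proof is silent on this issue: it fixes an arbitrary unramified double cover and proceeds as if the associated Prym were automatically not a product, even though the surrounding text defines Kummer surfaces only for $J$ not a product of elliptic curves. So on this point your write-up is more scrupulous than the paper's, and the extra work you anticipate is genuinely owed.
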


 \begin{proof}[Proof of the lemma]
Suppose that $C$ is such a curve, and let $\sigma\colon C_1\to C$ be an 
unramified covering of degree~$2$. Denote by $(J,\Theta)$ the Prym variety 
corresponding to this covering~\cite[Section 12.2]{BiLa}. According to 
\cite[Proposition~12.5a]{BiLa}, the Abel--Prym map embeds $C'$ in $J$ as an 
element of the linear system~$2\Theta$. Multiplication by $-1$ on $J$ 
restricts on $C_1$ to the involution induced by $\sigma$, hence the image 
of $C_1$ via the map given by $\lmod 2\Theta\rmod$ is a plane section of the 
Kummer surface associated to $(J,\Theta)$, and this section is isomorphic 
to~$C$.
 \end{proof}

Now if we pick a smooth plane curve $C_0\subset\PP^2$ of degree $4$ such 
that all its Weierstrass points have index one, or, equivalently, all its 
inflexion points are simple, Lemma~\ref{Pardini} shows that $C_0$ is a 
plane section of a Kummer surface $X\subset\PP^3$; since the property ``all 
the Weierstrass points have index one'' is open, a generic plane section of 
$X$ has only simple inflexion points, which completes the proof of 
Lemma~\ref{self-dual:3,4}.

\bibliographystyle{amsalpha}
\bibliography{proj_new}

\end{document}